\theoremstyle{plain}
\newtheorem{theorem}{Theorem}[section]
\newtheorem{lemma}[theorem]{Lemma}
\newtheorem{prop}[theorem]{Proposition}
\newcommand{\Aut}{\operatorname{Aut}}
\newcommand{\wt}{\operatorname{wt}}
\begin{document}

\title{Addressing Johnson graphs, complete multipartite graphs, odd cycles and random graphs}
\author{Noga Alon\footnote{Department of Mathematics, Princeton University, 
Princeton, NJ 08544 and Schools of Mathematics and Computer Science,
Tel Aviv University, Tel Aviv, Israel, {\tt nalon@math.princeton.edu}.}\,,
Sebastian M. Cioab\u{a}\footnote{Department of Mathematical Sciences, University of Delaware, Newark, DE 19707, {\tt cioaba@udel.edu}. }\,, Brandon D. Gilbert\footnote{Department of Mathematical Sciences, University of Delaware, Newark, DE 19707, {\tt brandong@udel.edu}. }\,,\\
 Jack H. Koolen\footnote{School of Mathematical Sciences, University of Science and Technology of China, Wen-Tsun Wu Key Laboratory of the Chinese Academy of Sciences, Hefei, Anhui, China, {\tt koolen@ustc.edu.cn}.} \, and Brendan D. McKay\footnote{Research School of Computer Science, Australian National University, ACT 2601, Australia, {\tt brendan.mckay@anu.edu.au}.}}
\date{\today}
\maketitle

\begin{abstract}
Graham and Pollak showed that the vertices of any graph $G$ can be addressed with $N$-tuples of three symbols, such that the distance between any two vertices may be easily determined from their addresses. An addressing is optimal if its length $N$ is minimum possible.

In this paper, we determine an addressing of length $k(n-k)$ for the 
Johnson graphs $J(n,k)$ and we show that our addressing is optimal 
when $k=1$ or when $k=2, n=4,5,6$, but not when $n=6$ and $k=3$. 
We study the addressing problem as well as a variation of it in 
which the alphabet used has more than three symbols, for other graphs 
such as complete multipartite graphs and odd cycles. We also present 
computations describing the distribution of the minimum length of 
addressings for connected graphs with up to $10$ vertices.
Motivated by these computations we settle a problem of Graham,
showing that most graphs on $n$ vertices have an addressing of length 
at most $n-(2-o(1))\log_2 n$.
\end{abstract}

\section{Introduction}

Let $r\geq 2$ be an integer. A $(0,1,\dots,r-1,*)$-addressing of a graph $G=(V,E)$ is a function $f:V\rightarrow \{0,1,\dots,r-1,*\}^N$ for some natural number $N$ such that for any two vertices $x,y\in V$, the distance between $x$ and $y$ in the graph $G$ equals the number of positions $j$ such that the $j$-th entries of $f(u)$ and $f(v)$ are distinct and neither equals $*$. Let $N_r(G)$ denote the minimum $N$ for which such an addressing is possible. Addressings of length $N_r(G)$ will be called optimal. The distance multigraph $\mathcal{D}(G)$ of the graph $G$ is the multigraph whose vertex set is $V$, where the number of edges between $x,y\in V$ equals the distance in $G$ between $x$ and $y$. It is not too hard to see that $N_r(G)$ equals the minimum number of complete multipartite graphs whose edges partition the edge multiset of the distance multigraph of $G$, where each complete multipartite graph in the partition must have between $2$ and $r$ color classes.

For $r=2$, Graham and Pollak \cite{GP1} conjectured that $N_2(G)\leq n-1$ for any connected graph $G$ with $n$ vertices. This conjecture, also known as {\em the squashed cube conjecture}, was proved by Winkler \cite{W}. Graham and Pollak \cite{GP1} proved the following result (which they attributed to Witsenhausen):
\begin{equation}\label{lowerboundN2}
N_2(G)\geq \max(n_+(D),n_{-}(D)),
\end{equation}
where $D$ is the $|V|\times |V|$ matrix whose entry $(x,y)$ is the distance in $G$ between $x$ and $y$, and $n_+(D)$ and $n_{-}(D)$ denote the number of positive and negative eigenvalues of $D$, respectively. Following Kratzke, Reznick and West \cite{KRW}, an addressing of $G$ of length $\max(n_+(D),n_{-}(D))$ will be called eigensharp. Note that eigensharp addressings are optimal. Graham and Pollak \cite{GP1} proved that complete graphs, trees and odd cycles of order $n$ have eigensharp addressings of length $n-1$ and even cycles have eigensharp addressings of length $n/2$.
Elzinga, Gregory and Vander Meulen \cite{EGVM} proved that the Petersen graph does not have an eigensharp addressing and found an optimal addressing of it of length $6$ (one more than the lower bound \eqref{lowerboundN2}). Cioab\u{a}, Elzinga, Markiewitz, Vander Meulen and Vanderwoerd \cite{CEMVV} gave two proofs showing that the Hamming graphs have eigensharp addressings and started the investigation of optimal addressings for the Johnson graphs. The Johnson graph $J(n,k)$ has as vertices all the $k$-subsets of the set $\{1,\dots,n\}$ and two $k$-subsets $S$ and $T$ are adjacent if and only if $|S\cap T|=k-1$. In this paper, we prove that $N_2(J(n,k))\leq k(n-k)$ by constructing an explicit addressing of $J(n,k)$ with $(0,1,*)$-words of length $k(n-k)$. We answer a question from \cite{CEMVV} and show that $N_2(J(n,2))=2(n-2)$ for $n=5,6$. In the case of $n=6$ and $k=3$, using the computer, we prove that $N_2(J(6,3))=8$ which is smaller than our general bound above. The best known lower bound is $N_2(J(n,k))\geq n$ (see \cite[Theorem 5.3]{CEMVV}).

For $r\geq 3$, Watanabe, Ishii and Sawa \cite{WIS} studied 
$(0,1,\dots,r-1,*)$-addressings and proved that 
$N_r(G)\geq \max(n_{+}(D)/(r-1),n_{-}(D)/(r-1))$. 
Note that the stronger result $N_r(G)\geq \max(n_{+}(D),n_{-}(D)/(r-1))$ 
follows from the work of Gregory and Vander Meulen 
\cite[Theorem 4.1]{GVM} (see also \cite{Sawa}). 
In \cite{WIS}, the first three authors prove that the Petersen graph can be optimally addressed with $(0,1,2,*)$-words of length $4$ and show that $N_r(C_n)=n/2$ for any $n$ even and any $r\geq 3$. For odd cycles, they prove that $N_3(C_{2n+1})=n+1$ for $n\in \{2,3,4\}$ and ask whether this statement is true for larger values of $n$. In this paper, we determine that this is true for $n=5$ and $N_3(C_{11})=6$, but fails for $n\in \{6,7,8,9\}$, where $N_{3}(C_{13})=8, N_{3}(C_{15})=9$, $N_{3}(C_{17})=10$ and $N_3(C_{19})=11$.

For $a,m\geq 1$, let $K(a;m)$ denote the complete $m$-partite graph where each color class has exactly $a$ vertices. The problem of computing $N_2(K(2;m))$ has been investigated by Hoffman \cite{H} and Zaks \cite{Z}.
Using the some small length addressings found by computer for $K(3;3), K(4;4)$ and $K(5;5)$ and a simple combinatorial blow-up argument, we obtain the upper bounds below for any $s\geq 1$:
\begin{align*}
6s\leq N_2(K(3;3s)&\leq 8s-1\\
12s\leq N_2(K(4;4s))&\leq 15s-1\\
20s\leq N_2(K(5;5s))&\leq 24s-1.
\end{align*}
The lower bounds follow from \eqref{lowerboundN2} and unfortunately are quite far from our upper bounds. 

We conclude our paper with an investigation of the typical value of
$N_2(G)$  for connected graphs $G$ on $n$ vertices. We start with
computations describing the distribution of $N_2(G)$ when $G$ 
ranges over all connected graphs with $n\leq 10$ vertices. 
These computations led us to believe that for any
fixed integer $c\geq 1$, almost all connected graphs $G$ of 
order $n$ must have $N_2(G)\leq n-c$, contradicting a suggested conjecture of
Ron Graham from \cite[page 148]{Graham}, where he writes that
it is natural to guess that $N_2(G) =n-1$ for almost all graphs on
$n$ vertices. Motivated by these computations we have been able to prove
our conjecture, showing that in fact $N_2(G) \leq n-(2-o(1))\log_2 n$ for
almost all graphs on $n$ vertices.

\section{Johnson graphs}

For any natural number $m$, we use $[m]$ to denote the set $\{1,\dots,m\}$. Let $n\geq k\geq 1$ be two integers. The Johnson graph $J(n,k)$ has as vertices all the $k$-subsets of the set $[n]$ and two $k$-subsets $S$ and $T$ are adjacent if and only if $|S\cap T|=k-1$. When $k=1$, the Johnson graph $J(n,1)$ is the complete graph $K_n$. When $n=2$, the Johnson graph $J(n,2)$ is the line graph of $K_n$, also known as the triangular graph. Note that the distance between $S$ and $T$ in $J(n,k)$ equals $\frac{|S\Delta T|}{2}=|S\setminus T|=|T\setminus S|$ \cite[p. 255]{BCN}.

To describe our $\{0,1,*\}$-addressing of $J(n,k)$, we need the following function. Let ${[n]\choose k}$ denote the family of all $k$-subsets of $[n]$ and let $\mathcal{P}(X)$ denote the power-set of a set $X$. Define $f:{[n]\choose k}\rightarrow \mathcal{P}(([n]\setminus [k])\times [k])$ as follows. If $S=[k]$, then $f(S)=\emptyset$. If $S\neq [k]$, then let $A=S\setminus [k]=\{x_1,\dots,x_t\}$, with $t\geq 1$ and $n\geq x_1>\dots >x_t\geq k+1$ and let $B=[k]\setminus S=\{y_1,\dots,y_t\}$ with $1\leq y_1<\dots <y_t\leq k$. Define 
\begin{equation}\label{fS}
f(S)=\{(x_1,y_1),\dots,(x_t,y_t)\}.
\end{equation}
For example, if $n=12, k=5$ and $S=\{1,4,6,8,12\}$, then $A=\{12,8,6\}, B=\{2,3,5\}$ and $f(S)=\{(12,2),(8,3), (6,5)\}$.

Our $(0,1,*)$-addressing $a(S,(x,y))$ of each vertex $S$ of $J(n,k)$ with words of length $k(n-k)$ (indexed by the ordered pairs of the form $(x,y)$ with $x\in [n]\setminus [k]$ and $y\in [k]$) is done by the following procedure:
\begin{enumerate}
    \item If $(x,y) \in f(S)$, then $a(S,(x,y))=1$, else
    \item if $\texttt{max}(S) < x$, then $a(S,(x,y))=0$, else
    \item if $(\exists z)\big((z < y) \land ((x,z) \in f(S))\big)$, then $a(S,(x,y))=*$, else
    \item if $y \in S$, then $a(S,(x,y))=0$, else
    \item if $(\exists z)\big((z < x) \land ((z,y) \in f(S))\big)$, $a(S,(x,y))=0$, else
    \item $a(S,(x,y))=*$.
\end{enumerate}

We give below three examples of this addressing in the cases of $J(4,1)$,  $J(5,2)$, and $J(6,3)$. The superscripts in the tables below indicate the rule used for generating that symbol. Since the symbol $1$ can only be generated in step 1, we omit that superscript.

\begin{equation}
\begin{tabular}{|c|c|c|c|c|}
\hline
subset & $(2,1)$ & $(3,1)$ & $(4,1)$ & address \\
\hline
$\{1\}$ &$0^2$ &$0^2$ &$0^2$  & 000\\
\hline
$\{2\}$ &$1$ &$0^2$ &$0^2$  & 100\\
\hline
$\{3\}$ &$*^6$ &$1$ &$0^2$ & *10\\
\hline
$\{4\}$ &$*^6$ &$*^6$ &$1$ & **1 \\
\hline
\end{tabular}
\end{equation}

\begin{equation}
\begin{tabular}{|c|c|c|c|c|c|c|c|}
\hline
subset & $(3,1)$ & $(3,2)$ & $(4,1)$ & $(4,2)$ & $(5,1)$ & $(5,2)$ & address\\
\hline
\{1,2\} &$0^2$ &$0^2$& $0^2$&$0^2$  &$0^2$ & $0^2$ & 000000 \\
\hline
\{1,3\} &$0^4$ &$1$ &$0^2$  &$0^2$  &$0^2$ & $0^2$& 010000 \\
\hline
\{2,3\} &$1$ &$*^3$  &$0^2$  &$0^2$  &$0^2$ & $0^2$& 1*0000  \\
\hline
\{1,4\} &$0^4$ &$*^6$ &$0^4$  &$1$ &$0^2$ & $0^2$ & 0*0100\\
\hline
\{2,4\} &$*^6$ &$0^4$&$1$&$*^3$  &$0^2$ &$0^2$ & *01*00 \\
\hline
\{3,4\} &$*^6$ &$1$ &$1$ &$*^3$  &$0^2$ & $0^2$ & *11*00\\
\hline
\{1,5\} & $0^4$ & $*^6$ & $0^4$ & $*^6$ &$0^4$ & $1$&0*0*01 \\
\hline
\{2,5\} &$*^6$ & $0^4$ & $*^6$ & $0^4$ &$1$ & $*^3$ & *0*01*\\
\hline
\{3,5\} &$*^6$ &$1$  & $*^6$ &$0^5$ &$1$ & $*^3$ & *1*01* \\
\hline
\{4,5\} & $*^6$ &$*^6$ & $*^6$ & $1$ &$1$ & $*^3$ & ***11* \\
\hline
\end{tabular}
\end{equation}
\begin{equation}
\begin{tabular}{|c|c|c|c|c|c|c|c|c|c|c|}
\hline
subset & $(4,1)$ & $(5,1)$ & $(6,1)$ & $(4,2)$ & $(5,2)$ & $(6,2)$ & $(4,3)$ & $(5,3)$ & $(6,3)$ & address\\\hline
\{1,2,3\} &$0^2$ &$0^2$ &$0^2$ &$0^2$ &$0^2$ &$0^2$ &$0^2$ &$0^2$ &$0^2$ &000000000\\\hline
\{1,2,4\} &$0^4$ &$0^2$ &$0^2$ &$0^4$ &$0^2$ &$0^2$ &$1$ &$0^2$ &$0^2$ &000000100\\\hline
\{1,3,4\} &$0^4$ &$0^2$ &$0^2$ &$1$ &$0^2$ &$0^2$ &$*^3$ &$0^2$ &$0^2$ &000100*00\\\hline
\{2,3,4\} &$1$ &$0^2$ &$0^2$ &$*^3$ &$0^2$ &$0^2$ &$*^3$ &$0^2$ &$0^2$ &100*00*00\\\hline
\{1,2,5\} &$0^4$ &$0^4$ &$0^2$ &$0^4$ &$0^4$ &$0^2$ &$*^6$ &$1$ &$0^2$ &000000*10\\\hline
\{1,3,5\} &$0^4$ &$0^4$ &$0^2$ &$*^6$ &$1$ &$0^2$ &$0^4$ &$*^3$ &$0^2$ &000*100*0\\\hline
\{2,3,5\} &$*^6$ &$1$ &$0^2$ &$0^4$ &$*^3$ &$0^2$ &$0^4$ &$*^3$ &$0^2$ &*100*00*0\\\hline
\{1,4,5\} &$0^4$ &$0^4$ &$0^2$ &$*^6$ &$1$ &$0^2$ &$1$ &$*^3$ &$0^2$ &000*101*0\\\hline
\{2,4,5\} &$*^6$ &$1$ &$0^2$ &$0^4$ &$*^3$ &$0^2$ &$1$ &$*^3$ &$0^2$ &*100*01*0\\\hline
\{3,4,5\} &$*^6$ &$1$ &$0^2$ &$1$ &$*^3$ &$0^2$ &$*^3$ &$*^3$ &$0^2$ &*101*0**0\\\hline
\{1,2,6\} &$0^4$ &$0^4$ &$0^4$ &$0^4$ &$0^4$ &$0^4$ &$*^6$ &$*^6$ &$1$ &000000**1\\\hline
\{1,3,6\} &$0^4$ &$0^4$ &$0^4$ &$*^6$ &$*^6$ &$1$ &$0^4$ &$0^4$ &$*^3$ &000**100*\\\hline
\{2,3,6\} &$*^6$ &$*^6$ &$1$ &$0^4$ &$0^4$ &$*^3$ &$0^4$ &$0^4$ &$*^3$ &**100*00*\\\hline
\{1,4,6\} &$0^4$ &$0^4$ &$0^4$ &$*^6$ &$*^6$ &$1$ &$1$ &$0^5$ &$*^3$ &000**110*\\\hline
\{2,4,6\} &$*^6$ &$*^6$ &$1$ &$0^4$ &$0^4$ &$*^3$ &$1$ &$0^5$ &$*^3$ &**100*10*\\\hline
\{3,4,6\} &$*^6$ &$*^6$ &$1$ &$1$ &$0^5$ &$*^3$ &$*^3$ &$0^4$ &$*^3$ &**110**0*\\\hline
\{1,5,6\} &$0^4$ &$0^4$ &$0^4$ &$*^6$ &$*^6$ &$1$ &$*^6$ &$1$ &$*^3$ &000**1*1*\\\hline
\{2,5,6\} &$*^6$ &$*^6$ &$1$ &$0^4$ &$0^4$ &$*^3$ &$*^6$ &$1$ &$*^3$ &**100**1*\\\hline
\{3,5,6\} &$*^6$ &$*^6$ &$1$ &$*^6$ &$1$ &$*^3$ &$0^4$ &$*^3$ &$*^3$ &**1*1*0**\\\hline
\{4,5,6\} &$*^6$ &$*^6$ &$1$ &$*^6$ &$1$ &$*^3$ &$1$ &$*^3$ &$*^3$ &**1*1*1**\\\hline

\end{tabular}
\end{equation}

We give two examples below where the order of our algorithm is significant to the output.

\begin{center}
    $J(4,2)$
\end{center}
\begin{equation}
\begin{tabular}{|c|c|c|c|c|c|c|}
\hline
   subset  & entry & step 1 & step 2 & step 3 & step 4 & step 5 \\ \hline
   $\{2,3\}$ & $(3,2)$  & Fails & Fails & Succeeds & Succeeds & Fails\\
   \hline
\end{tabular}
\end{equation}

\begin{center}
    $J(5,3)$
\end{center}
\begin{equation}
\begin{tabular}{|c|c|c|c|c|c|c|}
\hline
   subset  & entry & step 1 & step 2 & step 3 & step 4 & step 5 \\ \hline
   $\{3,4,5\}$ & $(5,2)$  & Fails & Fails & Succeeds & Fails & Succeeds\\
   \hline
\end{tabular}
\end{equation}

For $S,T\in {[n]\choose k}$, a pair $(x,y)\in ([n]\setminus [k])\times [k]$ is called $(S,T)$-good if 
$$\{a(S,(x,y)),a(T,(x,y))\}=\{0,1\}.$$ Let $c(S,T)$ denote the number of $(S,T)$-good pairs. Our goal is to prove the following result which implies that our procedure on page 2 gives a valid $(0,1,*)$-addressing of $J(n,k)$.
\begin{theorem}
For any $S,T\in {[n]\choose k}, c(S,T)=\frac{|S\Delta T|}{2}=|S\setminus T|=|T\setminus S|$.
\end{theorem}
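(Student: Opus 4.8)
The plan is to show that the number of $(S,T)$-good pairs equals $|S \setminus T|$ by establishing a bijection, or at least a counting correspondence, between the good pairs and the elements of $S \setminus T$. Since the addressing assigns to each vertex $S$ a word indexed by pairs $(x,y)$, and since the symbol $1$ appears in position $(x,y)$ exactly when $(x,y) \in f(S)$ (by step~1, the only rule that produces a $1$), the set of positions where $a(S,\cdot) = 1$ is precisely the graph of the partial matching $f(S)$ between $A = S \setminus [k]$ and $B = [k] \setminus S$. A pair is $(S,T)$-good when one of $a(S,(x,y)), a(T,(x,y))$ is $0$ and the other is $1$; so every good pair has a $1$ coming from either $f(S)$ or $f(T)$, and I would organize the proof around which of these two sets the $1$ belongs to.

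\medskip

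First I would reduce to understanding $c(S,T)$ via the symmetric structure of the construction. By the distance formula quoted from \cite[p.~255]{BCN}, $|S \setminus T| = |T \setminus S|$, so it suffices to prove $c(S,T) = |S \setminus T|$, and I expect the cleanest route is to prove two complementary counting statements: the number of good pairs $(x,y)$ for which $a(S,(x,y))=1$ and $a(T,(x,y))=0$ equals $|S \setminus T|$ computed in one natural way, and symmetrically with $S,T$ swapped, then reconcile them. Concretely, for each element that lies in exactly one of $S, T$, I would identify a unique index position $(x,y)$ that is forced to be a good pair, using the definitions of $A$, $B$ and the matching \eqref{fS}. The key technical fact to isolate and prove is a \emph{consistency lemma}: for a fixed position $(x,y)$ with $(x,y) \in f(S)$ (so $a(S,(x,y))=1$), the value $a(T,(x,y))$ is determined by a clean combinatorial condition on $T$, and in particular $a(T,(x,y)) = 0$ precisely in the cases that correspond to genuine differences between $S$ and $T$ at that coordinate, while $a(T,(x,y)) \in \{1,*\}$ otherwise.

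\medskip

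The heart of the argument, and the main obstacle, is handling the $*$ symbol correctly, since the rules in steps~3--6 were carefully ordered to guarantee that whenever one address has a $1$ in a position, the other address is either $1$, $0$, or $*$ in a way that makes the good-pair count come out exactly right. I would work case by case through the six rules: since $1$ arises only from step~1, a $0$ can arise from steps~2, 4, or 5, and a $*$ from steps~3 or 6, I must verify that in every position where exactly one of $S,T$ triggers step~1, the other triggers a $0$-rule in exactly the situations that should be counted, and a $*$-rule otherwise. The delicate point is that the $*$ entries are meant to be "neutral" and must never combine with a $1$ in a way that either over- or under-counts the distance; the ordering of the steps (for instance that step~3 pre-empts step~4) is precisely what enforces this, as the two worked examples for $J(4,2)$ and $J(5,3)$ illustrate.

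\medskip

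Finally, I would assemble the count. Having shown that each element of $S \setminus T$ contributes exactly one good pair with a $1$ on the $S$-side (and, by symmetry, each element of $T \setminus S$ contributes exactly one good pair with a $1$ on the $T$-side), and having verified via the consistency lemma that no position is double-counted and no spurious good pair arises from $*$/$1$ or $0$/$0$ collisions, I conclude $c(S,T) = |S \setminus T| = |T \setminus S| = \tfrac{1}{2}|S \Delta T|$. I expect the bookkeeping to hinge entirely on the matching structure of $f$: because $f(S)$ pairs the "incoming" elements $A = S \setminus [k]$ with the "vacated" slots $B = [k]\setminus S$ in a strictly order-reversing-versus-order-preserving fashion, the positions carrying a $1$ are in canonical bijection with $S \setminus T$ once one accounts for the common elements of $S$ and $T$, and the whole theorem should follow from carefully tracking this bijection through the case analysis.
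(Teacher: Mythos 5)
Your plan hinges on a counting claim that is false as stated: that the good pairs with $a(S,(x,y))=1$ and $a(T,(x,y))=0$ number $|S\setminus T|$, and symmetrically those with the $1$ on the $T$-side number $|T\setminus S|$. Since every good pair has its $1$ on exactly one side, these two counts add up to $c(S,T)$, so your two ``complementary'' statements would force $c(S,T)=|S\setminus T|+|T\setminus S|=|S\Delta T|$, twice the correct value. A concrete counterexample to the intermediate claim: in $J(5,2)$ take $S=\{2,3\}$, $T=\{1,4\}$, so $f(S)=\{(3,1)\}$, $f(T)=\{(4,2)\}$ and $|S\setminus T|=2$; the addresses are $1{*}0000$ and $0{*}0100$, and there is exactly \emph{one} good pair with the $1$ on the $S$-side (namely $(3,1)$), not two. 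The correct statement is that the \emph{total} number of good pairs, over both sides, equals $|S\setminus T|$, and the side carrying the $1$ is not determined by which of $S,T$ ``owns'' the discrepancy.

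The deeper gap is that the ``consistency lemma'' you hope for does not exist in the local form you describe. Whether $a(T,(x,y))=0$ for a position with $(x,y)\in f(S)$ is not governed by whether $x$ or $y$ is a genuine difference between $S$ and $T$ at that coordinate: e.g.\ in $J(6,3)$ with $S=\{1,4,5\}$, $T=\{2,3,4\}$, the pair $(4,3)$ lies in $f(S)$ and $3\in T\setminus S$, yet $a(T,(4,3))={*}$ and the pair is not good. What actually controls this is global: the paper forms the multigraph $h(S,T)$ whose edge set is $f(S)\cup f(T)$ (a union of two partial matchings), shows its degree-one vertices are exactly $S\Delta T$ so that it has $|S\Delta T|/2$ path components, and then proves that each path component $C$ contains exactly one good pair, namely its extremal edge $(x_{\max}(C),y_{\min}(C))$, while $2$-cycles, interior edges, and isolated vertices contribute nothing. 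Your proposal contains neither this auxiliary structure nor a substitute for it, and without it the case analysis over the six rules has no organizing principle that yields the right count; so as written the argument cannot be completed.
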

\begin{proof}
If $S=T$, then the statement is obvious. If $S\neq T$, then the proof follows from Lemma \ref{vert1comp}, Lemma \ref{xmaxymin} and the last sentence of the first paragraph in this section.
\end{proof}

The following results gives a characterization of the $(S,T)$-good pairs and we will use it later in this section.
\begin{lemma}\label{goodpair}
Let $S\neq T\in {[n]\choose k}$ and $(x,y)\in ([n]\setminus [k])\times [k]$. Then 
$$a(S,(x,y))=1 \text{ and } a(T,(x,y))=0$$ 
if and only if the following three conditions are satisfied:
\begin{equation}\label{cond1}
(x,y)\in f(S)\setminus f(T)
\end{equation}
and
\begin{equation}\label{cond2}
\lnot [(\exists z)\big((x<z) \land ((z,y) \in f(T)) \big)]
\end{equation}
and 
\begin{equation}\label{cond3}
\lnot [(\exists z)\big((z<y) \land ((x,z) \in f(T)) \big)]
\end{equation}
\end{lemma}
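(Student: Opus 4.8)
The plan is to prove Lemma~\ref{goodpair} by carefully unwinding the six-step addressing procedure for both $S$ and $T$ at the fixed coordinate $(x,y)$. The statement is an ``if and only if,'' so I would prove the two directions separately, but both directions hinge on understanding exactly when the procedure outputs $1$ and exactly when it outputs $0$.

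First I would analyze the symbol $a(S,(x,y))=1$. Inspecting the procedure, the symbol $1$ is assigned \emph{only} in step~1, so $a(S,(x,y))=1$ holds if and only if $(x,y)\in f(S)$. This immediately gives one half of condition~\eqref{cond1}. Next I would characterize when $a(T,(x,y))=0$. The value $0$ can be produced in step~2, step~4, or step~5, so the plan is to show that the conjunction of $(x,y)\notin f(T)$ together with the negated conditions \eqref{cond2} and \eqref{cond3} is precisely the statement that $T$ reaches one of these three ``$0$-producing'' steps rather than falling through to a $*$ in step~3 or step~6. Concretely, I would argue that step~3 fires for $T$ exactly when condition~\eqref{cond3} is \emph{violated} (there exists $z<y$ with $(x,z)\in f(T)$), so negating \eqref{cond3} is exactly what is needed to pass step~3 without outputting $*$. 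Similarly, I would examine step~5 and step~6: step~5 outputs $0$ when there is some $z<x$ with $(z,y)\in f(T)$, while step~6 (the fall-through) outputs $*$. Here the main subtlety is relating the condition in \eqref{cond2}, which quantifies over $z$ with $x<z$, to the structure of the function $f$; I expect this to require using the bijective/monotone structure of how $f(T)$ pairs the elements of $T\setminus[k]$ with $[k]\setminus T$ in the order-reversing manner defined in~\eqref{fS}.

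The forward direction would then assemble these observations: if $a(S,(x,y))=1$ and $a(T,(x,y))=0$, then $(x,y)\in f(S)$ from the step-1 analysis, and the fact that $T$ outputs $0$ rather than $*$ forces $T$ to avoid both the step-3 and step-6 ``$*$ traps,'' which is exactly the content of $\lnot\eqref{cond2}$-is-false and $\lnot\eqref{cond3}$-is-false, while $(x,y)\notin f(T)$ follows since otherwise step~1 would have output $1$ for $T$. For the reverse direction I would assume \eqref{cond1}, \eqref{cond2}, \eqref{cond3} and trace the procedure forward for each of $S$ and $T$ to confirm the outputs are $1$ and $0$ respectively, where for $T$ the negated conditions guarantee we pass through to a genuine $0$ rather than a $*$.

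The step I expect to be the main obstacle is the bookkeeping around step~2 and step~5 for $T$, and in particular showing that when $(x,y)\notin f(T)$ and conditions \eqref{cond2} and \eqref{cond3} hold, the procedure for $T$ never stops at step~3 or step~6 with a $*$. The conditions \eqref{cond2} and \eqref{cond3} directly block the two places where a $*$ could be emitted (steps~3 and~6), but verifying that one of the $0$-emitting steps~2,~4, or~5 necessarily fires requires a case analysis on the relative position of $y$ inside or outside $T$ and on whether $x$ exceeds $\max(T)$, using the order-reversing pairing in the definition of $f$. Carefully cataloguing these cases, rather than any deep idea, is where the real work lies.
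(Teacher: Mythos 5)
Your plan follows essentially the same route as the paper's proof: both directions are handled by tracing the six-step procedure for $T$ at the coordinate $(x,y)$, using that the symbol $1$ arises only in step 1, that step 3 is blocked exactly by condition \eqref{cond3}, and that the matching structure of $f(T)$ (each $y\in[k]$ is paired with at most one $z$) is what ties condition \eqref{cond2} to the step-5 versus step-6 dichotomy. The case analysis you flag at the end --- on whether $x>\max(T)$ and whether $y\in T$ --- is precisely how the paper completes the argument, so executing your outline would reproduce the paper's proof.
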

\begin{proof}
Assume that the conditions \eqref{cond1}, \eqref{cond2} and \eqref{cond3} are true. From \eqref{cond1}, we deduce immediately that $a(S,(x,y))=1$ and $a(T,(x,y))\neq 1$. Thus, $a(T,(x,y))$ is $0$ or $*$.  When evaluating $a(T,(x,y))$, the first step fails since $(x,y)\notin f(T)$. If $\max(T)<x$, then step 2 succeeds, we get $a(T,(x,y))=0$ and we are done. Otherwise, assume that $\max(T)\geq x$. Step $3$ of evaluating $a(T,(x,y))$ fails because \eqref{cond3} is satisfied. If $y\in T$, then step 4 succeeds, $a(T,(x,y))=0$ and we are done. Otherwise, assume that $y\notin T$. There exists $z\in T\setminus [k]$ such that $(z,y)\in f(T)$. By condition \eqref{cond2}, we must have that $z\leq x$. Note that if $z=x$, then we would have that $(x,y)=(z,y)\in f(T)$, contradiction with $(x,y)\in f(S)\setminus f(T)$. Thus, $z<x$. But now step 5 is satisfied and $a(T,(x,y))=0$. Thus, $a(S,(x,y))=1$ and $a(T,(x,y))=0$.

Assume that $a(S,(x,y))=1$ and $a(T,(x,y))=0$. From the definition on the previous page, we deduce that $(x,y)\in f(S)\setminus f(T)$. Thus, \eqref{cond1} is true.

Assume that \eqref{cond2} is not true. Thus, there exists $z_0$ such that $x<z_0$ and $(z_0,y)\in f(T)$. This implies that $y\notin T$. When evaluating $a(T,(x,y))$, step 1 obviously fails. Also, since $\max(T)\geq z_0>x$, step 2 fails as well. Because $a(T,(x,y))=0$, step 3 must also fail. Because $y\in T$, then step 4 must fail. Thus, in order to have $a(T,(x,y))=0$, step 5 must succeed and therefore, there is $z_1<x$ such that $(z_1,y)\in f(T)$. Now $(z_0,y)\in f(T), (z_1,y)\in f(T)$ and $z_0>x>z_1$ provide a contradiction which shows that \eqref{cond2} is true.

Assume that \eqref{cond3} is not true. Thus, there exists $z_0$ such that $z_0<y$ and $(x,z_0)\in f(T)$. Hence, $x\in T$ and $z_0\notin T$. When evaluating $a(T,(x,y))$, step 1 obviously fails. Also, because $x\in T$, we must have that $\max(T)\geq x$ and step 2 fails. The existence of $z_0$ with the above properties implies that step 3 succeeds and $a(T,(x,y))=*$, contradiction with $a(T,(x,y))=0$. Thus, \eqref{cond3} is true and our proof is complete.
\end{proof}

For $S\in {[n]\choose k}$, let $h(S)$ denote the graph with vertex set $[n]$ whose edges are the pairs in $f(S)$. When $S=[k]$, the graph $h(S)$ has no edges and when $S\neq [k]$, $h(S)$ is a matching. For $S\neq T\in {[n]\choose k}$, let $h(S,T)$ denote the multigraph obtained as union of the graphs $h(S)$ and $h(T)$. The non-trivial components of $h(S,T)$ must be cycles or paths. We prove later in this section (Lemma \ref{cycle2}) that the only cycle components possible are cycles of length $2$, but first we will show that the distance in $J(n,k)$ between $S$ and $T$ equals the number of path components in $h(S,T)$.
\begin{lemma}\label{vert1comp}
The set of vertices of degree one in $h(S,T)$ equals $S\Delta T$. Consequently, the number of path components in $h(S,T)$ equals $\frac{|S\Delta T|}{2}=|S\setminus T|=|T\setminus S|$.
\end{lemma}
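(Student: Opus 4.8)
The plan is to reduce the entire statement to a degree count in the multigraph $h(S,T)$, exploiting the fact that $h(S)$ and $h(T)$ are each matchings. First I would record the structure of a single $h(S)$. By the definition of $f(S)$ in \eqref{fS}, the edges of $h(S)$ form a perfect matching between $A=S\setminus[k]$ and $B=[k]\setminus S$ (when $S=[k]$ both sets are empty and $h(S)$ has no edges). Consequently every vertex of $A\cup B=S\,\Delta\,[k]$ has degree exactly $1$ in $h(S)$ and every other vertex has degree $0$; in particular $h(S)$ has maximum degree at most $1$.

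Next, since $h(S,T)$ is the multigraph union of $h(S)$ and $h(T)$, for any vertex $v$ we have $\deg_{h(S,T)}(v)=\deg_{h(S)}(v)+\deg_{h(T)}(v)$, which counts how many of the two sets $S\,\Delta\,[k]$ and $T\,\Delta\,[k]$ contain $v$. Hence $v$ has degree $1$ in $h(S,T)$ if and only if $v$ lies in exactly one of these two sets, that is, $v\in(S\,\Delta\,[k])\,\Delta\,(T\,\Delta\,[k])$. Using associativity and commutativity of the symmetric difference together with $[k]\,\Delta\,[k]=\emptyset$, this set collapses to $S\,\Delta\,T$, which establishes the first assertion of the lemma.

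For the ``consequently'' part I would argue purely structurally. Because $h(S)$ and $h(T)$ each have maximum degree at most $1$, their union $h(S,T)$ has maximum degree at most $2$, so every connected component is an isolated vertex, a nontrivial path, or a cycle---exactly the classification quoted in the text before the lemma. Cycle components and isolated vertices contribute no degree-$1$ vertices, while each nontrivial path contributes precisely its two endpoints, which are its only degree-$1$ vertices. Therefore the number of degree-$1$ vertices equals twice the number of path components, and combining this with the first part yields that the number of path components is $|S\,\Delta\,T|/2$. Finally, since $|S|=|T|=k$ we have $|S\setminus T|=|T\setminus S|$ and $|S\,\Delta\,T|=|S\setminus T|+|T\setminus S|=2|S\setminus T|$, giving the stated chain of equalities.

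I do not anticipate a serious obstacle; the argument is essentially degree bookkeeping plus one symmetric-difference identity. The only point requiring a little care is that $h(S,T)$ is genuinely a multigraph: a pair $(x,y)\in f(S)\cap f(T)$ yields a doubled edge. This is harmless here because it merely contributes degree $2$ to each of its two endpoints and forms a length-$2$ cycle, fully consistent with the path/cycle classification above; the sharper claim that such doubled edges are the \emph{only} cycles in $h(S,T)$ is not needed for this lemma and is handled separately in Lemma~\ref{cycle2}.
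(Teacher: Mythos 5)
Your proof is correct and takes essentially the same approach as the paper: both rest on the observation that $h(S)$ is a matching covering exactly the vertices of $S\,\Delta\,[k]$, so that the degree of a vertex in the multigraph union $h(S,T)$ counts its memberships in $S\,\Delta\,[k]$ and $T\,\Delta\,[k]$. You finish with the identity $(S\,\Delta\,[k])\,\Delta\,(T\,\Delta\,[k])=S\,\Delta\,T$ where the paper instead runs a four-way case analysis over $x\in[n]\setminus[k]$ and $y\in[k]$, and you also make explicit the maximum-degree-two component classification behind the final claim, which the paper leaves implicit; both versions are sound.
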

\begin{proof}
First, we show that $x\in [n]\setminus [k]$ has degree $1$ in $h(S,T)$ if and only if $x\in (S\Delta T)\setminus [k]$.

Assume that $x$ has degree $1$ in $h(S,T)$. Without loss of generality, there exists $y\in [k]$ such $(x,y)\in f(S)\setminus f(T)$. This implies that $x\in S$. Also, we deduce that $x\notin T$, as otherwise there would exist $z$ such that $(x,z)$ is an edge in $h(S,T)$ implying that the degree of $x$ is 2, contradiction. Hence, $x\in S\setminus T\subseteq S\Delta T$. 

Assume that $x\in (S\Delta T)\setminus [k]$. This means that $x\in [n]\setminus [k]$ and without loss of generality, assume that $x\in S$ and $x\notin T$. Because $x\in S$, there exists $y\in [k]$ such that $(x,y)$ is an edge in $h(S)$. The edge $(x,y)$ is the only edge involving $x$ in $h(S)$. Because $x\notin T$, it means that there is no $z$ such that $(x,z)\in f(T)$. Hence, $x$ is not contained in any edges of $h(T)$. Thus, $x$ has degree $1$ in $h(S,T)$.

Secondly, we show that $y\in [k]$ has degree $1$ in $h(S,T)$ if and only if $y\in (S\Delta T)\cap [k]$. 

Assume that $y$ has degree $1$ in $h(S,T)$. Without loss of generality, there exists $x\in [n]\setminus [k]$ such that $(x,y)\in f(S)\setminus f(T)$. This implies that $y\notin S$. Also, $y\in T$, as otherwise there would exist $z\in T$ such that $(z,y)$ is an edge in $h(S,T)$ implying that the degree of $y$ is $2$, contradiction. Hence, $y\in T\setminus S\subseteq S\Delta T$.

Assume that $y\in (S\Delta T)\cap [k]$. Without loss of generality, assume that $y\notin S$ and $y\in T$. Because $y\notin S$, there exists $z\in S$ such that $(z,y)$ is an edge in $h(S)$. This edge is the only edge involving $y$ in $h(S)$. Because $y\in T$, it means that there is no edge involving $y$ in $h(T)$. Hence, $y$ has degree $1$ in $h(S,T)$. This finishes our proof.
\end{proof}

Our goal for the remaining part of this section will be to prove that each path component of $h(S,T)$ contains exactly one good $(S,T)$-pair and that any other component of $h(S,T)$ (isolated vertex or cycle) contains no good $(S,T)$-pairs. 

For the remaining part of this section, let $S\neq T\in {[n]\choose k}$. Let $C$ be a non-trivial component of $h(S,T)$. Define the following:
\begin{align*}
x_{max}(C)&=\max(C\cap ([n]\setminus [k])) \\
y_{max}(C)&=\max(C\cap [k])\\
x_{min}(C)&=\min(C\cap ([n]\setminus [k]))\\ 
y_{min}(C)&=\min(C\cap [k]).
\end{align*}

\begin{lemma}\label{xmaxymin1}
Given any non-trivial component $C$ in $h(S,T)$, at least one of the following statements is true:
\begin{itemize}
    \item The vertex $x_{max}(C)$ has degree one.
    \item The vertex $y_{min}(C)$ has degree one.
    \item The edge $(x_{max}(C),y_{min}(C))$ is contained in both $f(S)$ and $f(T)$.
\end{itemize}
\end{lemma}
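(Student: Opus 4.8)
The plan is to prove the trichotomy by contraposition: I assume that neither $x_{max}(C)$ nor $y_{min}(C)$ has degree one, and then force the third alternative, namely that the edge $(x_{max}(C),y_{min}(C))$ lies in both $f(S)$ and $f(T)$. First I would record two preliminary observations. Since every edge of $h(S)$ and of $h(T)$ joins an element of $[n]\setminus[k]$ to an element of $[k]$, the multigraph $h(S,T)$ is bipartite with parts $[n]\setminus[k]$ and $[k]$; hence any non-trivial component $C$ meets both parts, so $x_{max}(C)$ and $y_{min}(C)$ are well defined and belong to $C$. Moreover, because $h(S,T)$ is the union of two matchings, every vertex has degree at most $2$, and every vertex of a non-trivial component has degree at least $1$; so ``not degree one'' means ``degree exactly two'', which in turn means the vertex is incident to exactly one edge of $h(S)$ and exactly one edge of $h(T)$.

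The crucial structural input is the order-reversing (antitone) nature of the pairing in \eqref{fS}: writing $A=S\setminus[k]=\{x_1>\dots>x_t\}$ and $B=[k]\setminus S=\{y_1<\dots<y_t\}$, the definition pairs $x_i$ with $y_i$, so if $(x,y),(x',y')\in f(S)$ with $x>x'$ then $y<y'$, and likewise for $f(T)$. I would isolate this as the key fact to be used twice.

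Now write $x^\ast=x_{max}(C)$ and $y^\ast=y_{min}(C)$ and suppose both have degree two. Then there exist $a,b\in[k]$ with $(x^\ast,a)\in f(S)$ and $(x^\ast,b)\in f(T)$, and there exist $c,d\in[n]\setminus[k]$ with $(c,y^\ast)\in f(S)$ and $(d,y^\ast)\in f(T)$; all of $a,b,c,d$ lie in $C$, so by maximality and minimality $a,b\ge y^\ast$ and $c,d\le x^\ast$. Applying the antitone fact inside $f(S)$ to the edges $(x^\ast,a)$ and $(c,y^\ast)$: if $c<x^\ast$ we would get $y^\ast>a$, contradicting $a\ge y^\ast$; hence $c=x^\ast$, and since $h(S)$ is a matching this forces $a=y^\ast$, so $(x^\ast,y^\ast)\in f(S)$. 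Running the identical argument inside $f(T)$ with $(x^\ast,b)$ and $(d,y^\ast)$ gives $(x^\ast,y^\ast)\in f(T)$. Thus the edge $(x^\ast,y^\ast)$ lies in both $f(S)$ and $f(T)$, which is the third alternative, completing the contrapositive.

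I expect the only real obstacle to be recognizing that the antitone pairing is exactly what makes the argument go through; once that property is in hand, the rest is a two-line case analysis. The remaining care is bookkeeping: checking that $a,b,c,d$ genuinely lie in $C$ (so that the inequalities $a,b\ge y^\ast$ and $c,d\le x^\ast$ are valid) and that the matching structure legitimately collapses $c=x^\ast$ to $a=y^\ast$ rather than merely giving two incident edges.
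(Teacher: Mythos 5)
Your proof is correct and rests on the same key fact as the paper's own argument: the order-reversing (antitone) pairing in \eqref{fS} combined with the extremality of $x_{max}(C)$ and $y_{min}(C)$ within their component. If anything, your version is the more self-contained of the two, since by applying the antitone property once inside $f(S)$ and once inside $f(T)$ you conclude directly that $(x_{max}(C),y_{min}(C))$ is a double edge, thereby avoiding the paper's case split on whether $x_{max}(C)$ and $y_{min}(C)$ are adjacent (a case the paper only sketches, deferring to a later lemma).
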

\begin{proof}
Assume that each claim above is false. If $x_{max}(C)$ and $y_{min}(C)$ are adjacent, then since $(x_{max}(C),y_{min}(C)\notin f(S)\cap f(T)$, assume that $(x_{max}(C),y_{min}(C))\in f(S)\setminus f(T)$. Because both $x_{max}(C)$ and $y_{min}(C)$ have degree two, there exists $x_0$ and $y_0$ such that $(x_{max}(C),y_0)\in f(T)$ and $(x_0,y_{min}(C))\in f(T)$. Because $x_{max}(C)>x_0$, the definition of $f(T)$ implies that $y_0<y_{min}(C)$, contradiction. If $x_{max}(C)$ and $y_{min}(C)$ are not adjacent (a case that we will see later in Lemma \ref{xmaxyminadj}, never happens), then we can derive a contradiction in a similar manner. 
\end{proof}
\begin{lemma}\label{xminymax1}
Given any non-trivial component $C$ in $h(S,T)$, at least one of the following is true:
\begin{itemize}
    \item The vertex $x_{min}(C)$ has degree one.
    \item The vertex $y_{max}(C)$ has degree one.
    \item The edge $(x_{min}(C),y_{max}(C))$ is contained in both $f(S)$ and $f(T)$.
\end{itemize}
\end{lemma}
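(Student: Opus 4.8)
The statement of Lemma \ref{xminymax1} is the exact mirror image of Lemma \ref{xmaxymin1}: the roles of $x_{max}(C)$ and $y_{min}(C)$ are swapped for $x_{min}(C)$ and $y_{max}(C)$. So my plan is to follow the proof of Lemma \ref{xmaxymin1} verbatim, reversing each inequality that comes from the definition of $f$. I would begin by assuming, for contradiction, that all three conclusions fail: both $x_{min}(C)$ and $y_{max}(C)$ have degree two, and the edge $(x_{min}(C),y_{max}(C))$ is not in $f(S)\cap f(T)$.

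First I would handle the case where $x_{min}(C)$ and $y_{max}(C)$ are adjacent. Since the edge $(x_{min}(C),y_{max}(C))$ is not in both $f(S)$ and $f(T)$, I may assume without loss of generality that it lies in $f(S)\setminus f(T)$. Because both endpoints have degree two in $h(S,T)$, each must also be covered by an edge of $h(T)$: there exist $x_0$ and $y_0$ with $(x_{min}(C),y_0)\in f(T)$ and $(x_0,y_{max}(C))\in f(T)$. Now the key is to invoke the order-reversing property of the matching $f(T)$ defined in \eqref{fS}: the pairs of $f(T)$ match the larger elements of $A$ with the smaller elements of $B$, so within $f(T)$, a smaller first coordinate forces a larger second coordinate, and vice versa. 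Since $x_0\neq x_{min}(C)$ and $x_{min}(C)$ is the minimum element of $C\cap([n]\setminus[k])$, we have $x_0>x_{min}(C)$; combined with $(x_0,y_{max}(C))\in f(T)$ and $(x_{min}(C),y_0)\in f(T)$, the order-reversal gives $y_0>y_{max}(C)$. This contradicts the maximality of $y_{max}(C)$ in $C\cap[k]$, completing this case.

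For the non-adjacent case I would, exactly as in Lemma \ref{xmaxymin1}, remark that a parallel argument yields a contradiction, and note (again following the excerpt) that this configuration is in fact shown never to occur in the forthcoming Lemma \ref{xmaxyminadj}. The only real subtlety — and the step I expect to need the most care — is getting the direction of the order-reversal correct for each of $f(S)$ and $f(T)$, since $f$ pairs the decreasing enumeration of $A$ with the increasing enumeration of $B$; a sign error here would collapse the argument. Apart from that, the proof is a routine transcription of the previous lemma with inequalities flipped, so I would keep it short and lean on the symmetry rather than reprove the structural facts about $f$.
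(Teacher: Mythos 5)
Your proposal is correct and matches the paper's intent exactly: the paper itself omits this proof, stating only that it is "similar to Lemma \ref{xmaxymin1}," and your mirrored argument (swapping $x_{max},y_{min}$ for $x_{min},y_{max}$ and using the order-reversing property of $f(T)$ to derive $y_0>y_{max}(C)$, contradicting maximality) is precisely that similar argument carried out. No issues.
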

\begin{proof}
The proof is similar to Lemma \ref{xmaxymin1} and will be omitted.
\end{proof}
A consequence of Lemma \ref{xmaxymin1} is that the only cycle components of $h(S,T)$ are cycles of length $2$ (double edges joining a pair of vertices).
\begin{lemma}\label{cycle2}
The graph $h(S,T)$ does not contain cycles with more than $2$ vertices.
\end{lemma}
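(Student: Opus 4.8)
The plan is to deduce the statement almost entirely from Lemma \ref{xmaxymin1} together with a degree count. First I would record the basic structural fact already noted in the text: since $h(S)$ and $h(T)$ are each matchings, every vertex of the multigraph $h(S,T)$ has degree at most $2$, so each non-trivial component is a path or a cycle. In particular, if $C$ is a cycle component then \emph{every} vertex of $C$ has degree exactly $2$; this is the observation that will let me discard the first two alternatives in Lemma \ref{xmaxymin1}.

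Next I would apply Lemma \ref{xmaxymin1} to an arbitrary cycle component $C$. Because $x_{max}(C)$ and $y_{min}(C)$ both have degree $2$ in a cycle, the first two conclusions of that lemma (that one of these vertices has degree one) are impossible. Hence the third conclusion must hold: the edge $(x_{max}(C),y_{min}(C))$ lies in both $f(S)$ and $f(T)$. This is precisely a double edge of $h(S,T)$ joining $x_{max}(C)$ to $y_{min}(C)$.

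The contradiction then comes from a local count at $x_{max}(C)$. The double edge already accounts for the one edge of $h(S)$ and the one edge of $h(T)$ incident to $x_{max}(C)$, so both edges at $x_{max}(C)$ go to $y_{min}(C)$ and its degree of $2$ is exhausted. Consequently the component containing $x_{max}(C)$ is exactly the $2$-cycle on $\{x_{max}(C),y_{min}(C)\}$, which forces $C$ to have only two vertices. Thus no cycle component can have more than two vertices, proving the lemma.

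I expect the genuine difficulty to lie not in this final deduction but in Lemma \ref{xmaxymin1} on which it rests; that lemma is where the anti-monotone structure of $f$ (larger elements of $S\setminus[k]$ are paired with smaller elements of $[k]\setminus S$) is exploited, and its proof as written defers the case in which $x_{max}(C)$ and $y_{min}(C)$ are non-adjacent to a later result (Lemma \ref{xmaxyminadj}). Granting Lemma \ref{xmaxymin1}, the argument above is short and purely combinatorial, with the only care needed being the interpretation of a repeated pair in $f(S)\cap f(T)$ as a genuine double edge of the multigraph rather than a single edge.
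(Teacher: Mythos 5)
Your proof is correct and follows essentially the same route as the paper: both arguments observe that every vertex of a cycle component has degree two, invoke Lemma \ref{xmaxymin1} to force the double edge $(x_{max}(C),y_{min}(C))\in f(S)\cap f(T)$, and conclude that this double edge exhausts the component. Your version merely spells out the final degree count a bit more explicitly than the paper does.
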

\begin{proof}
If $C$ is a cycle component of $h(S,T)$, then each vertex has a degree two. Thus by Lemma \ref{xmaxymin1}, $x_{max}(C)$ and $y_{min}(C)$ must be doubly adjacent and each only adjacent to one another, and thus must be all the vertices of the cycle.
\end{proof}
This limits the cases of components in $h(S,T)$ to just paths, isolated vertices, and doubly adjacent pairs of vertices. The following lemma uses Lemma \ref{goodpair} to give the first restriction on $(S,T)$-good pairs showing that the only possible good $(S,T)$-pairs are edges involving a vertex of degree one.
\begin{lemma}
No edge $(x,y)$ in $h(S,T)$ with both vertices of degree two is $(S,T)$-good.
\end{lemma}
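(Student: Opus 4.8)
The plan is to use Lemma~\ref{goodpair}, which characterizes the $(S,T)$-good pairs that arise with the value $1$ on one side and $0$ on the other. Since being $(S,T)$-good means $\{a(S,(x,y)),a(T,(x,y))\}=\{0,1\}$, every good pair $(x,y)$ must be an edge of $h(S,T)$ (because one of $a(S,(x,y)),a(T,(x,y))$ equals $1$, so $(x,y)$ lies in $f(S)$ or $f(T)$, hence is an edge of the multigraph). Without loss of generality I would assume $a(S,(x,y))=1$ and $a(T,(x,y))=0$, so that conditions \eqref{cond1}, \eqref{cond2} and \eqref{cond3} of Lemma~\ref{goodpair} all hold. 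Condition \eqref{cond1} says $(x,y)\in f(S)\setminus f(T)$, so in particular $(x,y)$ is a single (not double) edge of $h(S,T)$, contributed by $h(S)$ alone.

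The heart of the argument is to show that if $(x,y)$ is such a good edge, then at least one of $x,y$ has degree one in $h(S,T)$; contrapositively, if both $x$ and $y$ have degree two then $(x,y)$ cannot be good. So suppose for contradiction that both endpoints have degree two. Since $(x,y)\in f(S)$ accounts for one edge at $x$ and one at $y$, the second edge at each endpoint must come from $f(T)$: there exist $z$ and $w$ with $(x,w)\in f(T)$ and $(z,y)\in f(T)$. First I would argue $y\in T$: indeed, $(x,w)\in f(T)$ forces $x\in T$ (left coordinates of $f(T)$ lie in $T$), but that is not yet what I want—rather, the presence of the edge $(z,y)\in f(T)$ shows $y\notin T$ (right coordinates of $f(T)$ are elements of $[k]$ missing from $T$). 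I would then play the two $f(T)$-edges against the two forbidden configurations in Lemma~\ref{goodpair}. Because $f(T)$ is a matching pairing the sets $A=T\setminus[k]$ and $B=[k]\setminus T$ in the order-reversing way (largest of $A$ with smallest of $B$, etc.), the existence of the edge $(z,y)\in f(T)$ with $z\neq x$ must violate either \eqref{cond2} or \eqref{cond3}: if $z>x$ it directly contradicts \eqref{cond2}, and the remaining possibility $z<x$ can be ruled out using the other $f(T)$-edge $(x,w)$ together with the order-reversing structure, forcing $w<y$ and hence contradicting \eqref{cond3}.

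The key mechanism is the order-reversing pairing defining $f$ in \eqref{fS}: within $f(T)$, if one sorts the left coordinates decreasingly they correspond to the right coordinates sorted increasingly, so a pair $(a,b)\in f(T)$ with $a$ large is matched to $b$ small. Thus from $(x,w)\in f(T)$ and $(z,y)\in f(T)$ one gets a monotonicity relation between $(x,w)$ and $(z,y)$: $x>z \iff w<y$. This dichotomy is exactly what lets me convert ``the second edge at $x$'' and ``the second edge at $y$'' into a violation of one of the two conditions \eqref{cond2}, \eqref{cond3}, no matter which way the inequality between $z$ and $x$ falls.

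The main obstacle I anticipate is the bookkeeping of the order-reversing structure of $f(T)$ when translating between the two $f(T)$-edges at $x$ and at $y$: one must be careful that the vertex $z$ providing the second edge at $y$ and the vertex $w$ providing the second edge at $x$ interact correctly, and that the degenerate possibility $z=x$ (which would make $(x,y)=(z,y)\in f(T)$, contradicting \eqref{cond1}) is excluded before invoking the strict inequalities. Once that case analysis is set up cleanly, each branch collapses into a direct contradiction with \eqref{cond2} or \eqref{cond3}, completing the proof that no edge of $h(S,T)$ with both endpoints of degree two is $(S,T)$-good.
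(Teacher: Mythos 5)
Your proposal is correct and follows essentially the same route as the paper: both invoke Lemma~\ref{goodpair}, note that the second edge at each degree-two endpoint must come from $f(T)$, and use the order-reversing structure of the matching $f(T)$ to show one of conditions \eqref{cond1}, \eqref{cond2}, \eqref{cond3} must fail (the paper cases on the second edge at $x$, you case on the second edge at $y$ — a mirror image of the same case analysis). The only blemish is the stray ``argue $y\in T$'' in your aside, which you immediately and correctly replace with $y\notin T$; this does not affect the argument.
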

\begin{proof}
Let $(x,y)$ be an edge with both vertices $x$ and $y$ having degree two. Assume that $(x,y) \in f(S)$. Thus there must exist $y_0$ such that $(x,y_0) \in f(T)$. If $y_0=y$, then $\eqref{cond1}$ is not satisfied. If $y_0 < y$, then $\eqref{cond3}$ is not satisfied. If $y<y_0$, then there must also exist $x_0$ such that $(x_0,y) \in f(T)$. Because $y<y_0$, it must be that $x<x_0$ and $\eqref{cond2}$ is not satisfied. Thus, $(x,y)$ is not $(S,T)$-good.
\end{proof}

\begin{lemma}\label{xmaxyminadj}
For any non-trivial component $C$ of $h(S,T)$, $x_{max}(C)$ and $y_{min}(C)$ are adjacent.
\end{lemma}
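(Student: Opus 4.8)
The plan is to analyze the two antitone matchings $f(S)$ and $f(T)$ that together make up the component $C$, splitting into cases according to which of these matchings saturate the extremal vertices $X := x_{max}(C)$ and $Y := y_{min}(C)$. The single structural fact I will use repeatedly about $f(S)$ (and likewise $f(T)$) is that it is antitone: by \eqref{fS}, if $(x,y),(x',y')\in f(S)$ and $x>x'$, then $y<y'$. I will also use the trivial bounds that every vertex of $C$ on the $([n]\setminus[k])$-side is $\leq X$ and every vertex of $C$ on the $[k]$-side is $\geq Y$.

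First, suppose some single matching $M\in\{f(S),f(T)\}$ saturates both $X$ and $Y$; say $M=f(S)$, so that $(X,a)\in f(S)$ and $(c,Y)\in f(S)$ for some $a,c$. Since $a$ and $c$ are neighbours of $X$ and $Y$ they lie in $C$, whence $a\geq Y$ and $c\leq X$. If $c<X$, then applying the antitone property to $(X,a)$ and $(c,Y)$ gives $a<Y$, contradicting $a\geq Y$; hence $c=X$ and $(X,Y)\in f(S)$, so $X$ and $Y$ are adjacent. This immediately disposes of the case where $C$ is a cycle (every vertex has degree two, so both $X$ and $Y$ are saturated by $f(S)$), and more generally of any component in which a common matching saturates both $X$ and $Y$.

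It therefore remains to rule out the case in which no single matching saturates both $X$ and $Y$. In that situation each of $X,Y$ is saturated by exactly one of $f(S),f(T)$, these being different, so both $X$ and $Y$ have degree one and $C$ is a path with $X$ and $Y$ as its two endpoints. Here I would run a parity argument: since every edge of $h(S,T)$ joins the $([n]\setminus[k])$-side to the $[k]$-side, the vertices along the path alternate sides, so the path from the $([n]\setminus[k])$-vertex $X$ to the $[k]$-vertex $Y$ has an odd number of edges; and since every internal vertex has degree two it lies in both matchings, forcing the labels $f(S)/f(T)$ of consecutive edges to alternate. An odd-length alternating sequence of labels begins and ends with the same label, so the matching saturating $X$ (the first edge) coincides with the matching saturating $Y$ (the last edge), contradicting our assumption. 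Hence this case cannot occur, and $X,Y$ are always adjacent.

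I expect the main obstacle to be organizing the case split so that it is genuinely independent of Lemma \ref{xmaxymin1} and Lemma \ref{cycle2}, since the proof of Lemma \ref{xmaxymin1} defers its non-adjacent case to this very lemma; the argument above is designed to avoid that circularity by treating cycles through the degree-two/common-matching case and paths through the parity case, using only the antitone property and Lemma \ref{vert1comp}. The one point demanding care is the alternation claim: I must verify that consecutive edges of the path really do carry different matching labels, which uses that each internal vertex, having degree two, is incident to exactly one $f(S)$-edge and one $f(T)$-edge (a double edge would instead form a $2$-cycle, not an internal vertex of a path).
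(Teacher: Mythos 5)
Your proof is correct, and it takes a genuinely different route from the paper's. The paper argues by contradiction: assuming $x_{max}(C)$ and $y_{min}(C)$ are not adjacent, it uses the antitone structure of $f(S)$ and $f(T)$ (the same key fact you isolate from \eqref{fS}) to show that both vertices have degree one and are saturated by different matchings, so $C$ is a path with these two as endpoints; it then concludes that $x_{min}(C)$ and $y_{max}(C)$ are internal, hence of degree two, and invokes the companion statement about $x_{min}(C)$ and $y_{max}(C)$ (Lemma \ref{xminymax1}) to force a double edge inside the path, a contradiction. Your organization is different: a direct antitone argument when a single matching saturates both extremal vertices (which also disposes of cycles without appealing to Lemma \ref{cycle2}), and a parity/alternation argument along the path in the remaining case. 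The parity step replaces the paper's reliance on Lemmas \ref{xmaxymin1} and \ref{xminymax1} entirely, which is a genuine gain: as you observe, the paper's proof of Lemma \ref{xmaxymin1} defers its non-adjacent case to the present lemma with only a ``similar manner'' remark, so your self-contained version removes any appearance of circularity. The one delicate point --- that consecutive path edges carry different labels because each internal vertex, having degree two, meets exactly one edge of each matching, and that a double edge can only form a $2$-cycle component rather than sit inside a path --- is exactly the point you flag, and your justification of it is sound.
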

\begin{proof}
We prove this result by contradiction. If $x_{max}(C)$ and $y_{min}(C)$ are not adjacent, then assume that $(x_{max}(C),y_0) \in f(S)$ for some $y_0$. It must be that $y_0<y_{min}(C)$, and thus no edge from $f(S)$ could contain $y_{min}$(C). Thus, there is only one edge containing $y_{min}(C)$, say $(x_0,y_{min}) \in f(T)$. As well, by how $f(T)$ is constructed, there are no edges from $f(T)$ that contain $x_{max}(C)$. However, this would result in $x_{min}(C) \leq x_0 < x_{max}(C)$ and $y_{min}(C) < y_0 \leq y_{max}(C)$. Since both $x_{max}(C)$ and $y_{min}(C)$ have degree one, in this path component neither $x_{min}(C)$ nor $y_{max}(C)$ can have degree one and by Lemma \ref{xmaxymin1}, they are doubly adjacent, which can not happen in a path component. This contradiction disproves the assumption and proves the lemma.
\end{proof}
\begin{lemma}\label{xminymaxadj}
For any non-trivial component $C$ of $h(S,T)$, $x_{min}(C)$ and $y_{max}(C)$ are adjacent.
\end{lemma}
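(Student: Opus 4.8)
The statement to prove is Lemma~\ref{xminymaxadj}: for any non-trivial component $C$ of $h(S,T)$, the vertices $x_{min}(C)$ and $y_{max}(C)$ are adjacent. This is the dual companion to Lemma~\ref{xmaxyminadj}, which established adjacency of $x_{max}(C)$ and $y_{min}(C)$. The plan is to mirror the argument of Lemma~\ref{xmaxyminadj} exactly, swapping the roles of maxima and minima, and invoking Lemma~\ref{xminymax1} in place of Lemma~\ref{xmaxymin1}. The key structural fact driving both proofs is the monotonicity built into the definition of $f$: in a single matching $f(S)$, if we list the endpoints $x_1>\dots>x_t$ in $[n]\setminus[k]$ against $y_1<\dots<y_t$ in $[k]$, then larger $x$-values are paired with smaller $y$-values. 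So within $f(S)$ (and likewise within $f(T)$), the pairing reverses order.

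The proof proceeds by contradiction. Suppose $x_{min}(C)$ and $y_{max}(C)$ are not adjacent in $h(S,T)$. Consider the unique edge of $f(S)$ (or, after relabeling, of one of the two matchings) incident to $x_{min}(C)$, say $(x_{min}(C),y_0)\in f(S)$. Because $f(S)$ pairs smaller $x$ with larger $y$, and $x_{min}(C)$ is the smallest $x$-vertex in the component, the monotonicity of $f(S)$ forces $y_0>y_{max}(C)$ is impossible only relative to the component, so I would instead argue that $y_0$ must exceed every $y$-value that $f(S)$ pairs with a larger $x$; combined with non-adjacency this pins down that $y_0 > y_{max}(C)$ cannot hold, leaving $y_{max}(C)$ to be covered by the other matching $f(T)$ alone. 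Thus $y_{max}(C)$ has a unique incident edge $(x_0,y_{max}(C))\in f(T)$, and symmetrically $f(T)$ contributes no edge at $x_{min}(C)$. This yields the inequalities $x_{min}(C)<x_0\le x_{max}(C)$ and $y_{min}(C)\le y_0<y_{max}(C)$, showing that both $x_{min}(C)$ and $y_{max}(C)$ have degree one.

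Once both $x_{min}(C)$ and $y_{max}(C)$ are degree-one vertices, I would apply Lemma~\ref{xminymax1} to the component $C$: since its first two alternatives (degree one at $x_{min}(C)$ or at $y_{max}(C)$) are now used up by the non-adjacency hypothesis in a way that blocks adjacency, the remaining alternative of that lemma must hold, forcing $x_{max}(C)$ and $y_{min}(C)$ to be doubly adjacent. But a doubly adjacent pair is a $2$-cycle component by Lemma~\ref{cycle2}, which cannot occur inside a path component. This is the contradiction that disproves the assumption, exactly paralleling the final step of Lemma~\ref{xmaxyminadj}.

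The main obstacle I anticipate is getting the direction of the monotonicity inequalities right after the min/max swap, since the definition of $f$ reverses the order between $x$'s and $y$'s; a sign slip there would make the contradiction collapse. I would therefore state once, explicitly, the order-reversing property of $f$ within a single matching and then read off each inequality from it, rather than trusting symmetry blindly. A secondary subtlety is confirming that the ``doubly adjacent, which cannot happen in a path component'' conclusion is legitimate here: this relies on $C$ being a path, which is guaranteed by Lemma~\ref{cycle2} having already eliminated all cycles of length greater than two and by our having reduced to path components. With those two points handled carefully, the remainder is a routine transcription of the dual argument, and I would end by noting that the proof is analogous to that of Lemma~\ref{xmaxyminadj} to avoid needless repetition.
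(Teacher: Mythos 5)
Your overall plan is the right one and matches what the paper intends (its own proof is omitted as ``similar'' to that of Lemma~\ref{xmaxyminadj}): assume non-adjacency, use the order-reversing pairing inside $f(S)$ and $f(T)$ to show that $x_{min}(C)$ and $y_{max}(C)$ each have degree one, and then extract a forbidden doubly adjacent pair. Your derivation of the degree-one facts and of the inequalities $x_{min}(C)<x_0\le x_{max}(C)$ and $y_{min}(C)\le y_0<y_{max}(C)$ is correct, even though the phrasing in the middle paragraph is garbled.

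The final step, however, is logically broken as written. You invoke Lemma~\ref{xminymax1} and argue that ``its first two alternatives are used up\dots so the remaining alternative must hold.'' This is not a valid inference: in your situation the first two alternatives of Lemma~\ref{xminymax1} (degree one at $x_{min}(C)$, degree one at $y_{max}(C)$) are actually \emph{true}, and a true disjunct in an ``at least one of the following'' statement never forces the remaining one; moreover, the third alternative of that lemma concerns the pair $(x_{min}(C),y_{max}(C))$, not the pair $(x_{max}(C),y_{min}(C))$ that you claim becomes doubly adjacent. The correct move is the dual one: since $C$ is a path whose only two degree-one vertices are now $x_{min}(C)$ and $y_{max}(C)$, and since your inequalities give $x_{max}(C)\neq x_{min}(C)$ and $y_{min}(C)\neq y_{max}(C)$, both $x_{max}(C)$ and $y_{min}(C)$ have degree two; then it is Lemma~\ref{xmaxymin1} whose first two alternatives \emph{fail}, so its third alternative forces $(x_{max}(C),y_{min}(C))$ to lie in both $f(S)$ and $f(T)$, a $2$-cycle that cannot sit inside a path component. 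With that substitution the argument closes correctly and is exactly the paper's intended dual proof; note also that your opening remark about invoking Lemma~\ref{xminymax1} ``in place of'' Lemma~\ref{xmaxymin1} has the two lemmas the wrong way round for this direction.
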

\begin{proof}
The proof is similar to the one of the previous lemma and will be omitted.
\end{proof}

\begin{lemma}\label{xmaxymin}
For any path component $C$ in $h(S,T)$, the only edge that is $(S,T)$-good is $(x_{max}(C),y_{min}(C))$.
\end{lemma}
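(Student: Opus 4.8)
The plan is to prove that within a path component $C$, the edge $(x_{max}(C),y_{min}(C))$ is the unique $(S,T)$-good edge. By the lemma preceding Lemma~\ref{xmaxyminadj}, no edge with both endpoints of degree two can be good, so every good edge of $C$ must be incident to one of the two degree-one endpoints of the path $C$. By Lemma~\ref{vert1comp}, those two endpoints lie in $S\Delta T$. The strategy splits naturally into two parts: first show that the specific edge $(x_{max}(C),y_{min}(C))$ \emph{is} good, and then show that the (at most two) other candidate edges—the edges incident to the path endpoints—fail at least one of the three conditions of Lemma~\ref{goodpair}.

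For the first part I would invoke Lemma~\ref{xmaxyminadj}, which guarantees that $x_{max}(C)$ and $y_{min}(C)$ are adjacent, so $(x_{max}(C),y_{min}(C))$ is genuinely an edge of $h(S,T)$; say without loss of generality it lies in $f(S)\setminus f(T)$, giving condition~\eqref{cond1}. I then check conditions~\eqref{cond2} and~\eqref{cond3} directly against $T$. Condition~\eqref{cond2} asks that no $z>x_{max}(C)$ has $(z,y_{min}(C))\in f(T)$; this holds because any such $z$ would be a vertex of $C$ exceeding $x_{max}(C)$, contradicting maximality. Condition~\eqref{cond3} asks that no $z<y_{min}(C)$ has $(x_{max}(C),z)\in f(T)$; this holds because any such $z$ would be a $[k]$-vertex of $C$ below $y_{min}(C)$, contradicting minimality. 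Hence $(x_{max}(C),y_{min}(C))$ satisfies all three conditions and is good.

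For the uniqueness part, the candidate good edges are the two edges of $C$ incident to its degree-one endpoints. By Lemma~\ref{vert1comp} the degree-one vertices are exactly $S\Delta T\cap C$, and combined with Lemmas~\ref{xmaxyminadj} and~\ref{xminymaxadj} I expect the endpoint structure to force the two path-ends to be among $x_{max}(C),y_{min}(C),x_{min}(C),y_{max}(C)$. The edge at the end already handled is $(x_{max}(C),y_{min}(C))$; the remaining candidate is the edge incident to the \emph{other} endpoint, which I expect (via Lemma~\ref{xminymaxadj}) to be $(x_{min}(C),y_{max}(C))$. For this edge I would run the same three-condition test and show it fails: if this edge lies in $f(S)\setminus f(T)$ (or the reverse), then the extremal roles of $x_{min}$ and $y_{max}$ make it easy to exhibit a witness $z$ violating~\eqref{cond2} or~\eqref{cond3}, since now there \emph{are} larger $x$-values and smaller $y$-values available in $C$ on the matching edge from the opposite set.

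The main obstacle I anticipate is the bookkeeping of ``without loss of generality'' assignments of edges to $f(S)$ versus $f(T)$, together with pinning down precisely which four extremal vertices coincide and which are distinct along a given path. In short paths the endpoints $x_{max},y_{min},x_{min},y_{max}$ may partially collapse onto each other, so I would either argue uniformly using only the degree-one/degree-two dichotomy and the monotonicity built into the definition of $f$, or handle the parity of the path length (whether the two ends are both in $[n]\setminus[k]$, both in $[k]$, or mixed) as separate but parallel cases. The monotone structure of $f$—that within a single matching $f(S)$ a larger first coordinate forces a larger second coordinate—is the key recurring tool that makes each condition-check a one-line contradiction.
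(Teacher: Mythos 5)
Your proposal is correct and follows essentially the same route as the paper: verify the three conditions of Lemma~\ref{goodpair} for $(x_{max}(C),y_{min}(C))$ using the extremality of its coordinates, then rule out the only other candidate $(x_{min}(C),y_{max}(C))$ (identified via Lemmas~\ref{xminymax1} and~\ref{xminymaxadj} and the no-degree-two-edge lemma) by using the second edge at whichever of its endpoints has degree two as the witness violating~\eqref{cond2} or~\eqref{cond3}. The case analysis you anticipate is exactly the one the paper carries out, so no further ideas are needed.
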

\begin{proof}
By Lemma \ref{xmaxyminadj}, $x_{max}(C)$ and $y_{min}(C)$ are adjacent and without loss of generality, suppose that $(x_{max}(C),y_{min}(C))\in f(S)$. Because $C$ is a path, $(x_{max}(C),y_{min}(C))\notin f(T)$ and \eqref{cond1} is satisfied. Because there is no $x_0$ in $C$ such that $x_{max}(C)< x_0$,\eqref{cond2} is satisfied. Also, there is no $y_0$ in $C$ such that $y_0<y_{min}(C)$ and thus \eqref{cond3} is satisfied. Hence, $(x_{max}(C),y_{min}(C))$ is $(S,T)$-good.

If the component $C$ is a single edge, then we are done. If $C$ has two or more edges, then the only other edge with a degree one vertex is $(x_{min}(C),y_{max}(C))$ as shown by Lemma \ref{xminymax1} and Lemma \ref{xminymaxadj}. Because $C$ is not a single edge, one of $x_{min}(C)$ or $y_{max}(C)$ has degree one and the other has degree two. If $x_{min}(C)$ has a degree of two, there exists $y_0$ such that $(x_{min}(C),y_0)$ is an edge and $(x_{min}(C),y_{max}(C))$ does not satisfy $\eqref{cond2}$ as $y_0<y_{max}(C)$. Otherwise, if $y_{max}(C)$ has a degree of two, there is $x_0$ such that $(x_0,y_{max}(C))$ is an edge. In this case, $(x_{min}(C),y_{max}(C))$ does not satisfy $\eqref{cond3}$, as $x_{min}(C)<x_0$. Hence, $(x_{min}(C),y_{max}(C))$ is not $(S,T)$-good if $C$ has two or more edges.
\end{proof}

\subsection{An improved addressing}

Given that $N_2(J(n,k))=k(n-k)$ for $k=1,n\geq 1$ and for 
$k=2,n\in \{3,4,5,6\}$, it might be tempting to conjecture that 
$N_2(J(n,k))=k(n-k)$ for any integers $n\geq 2k\geq 4$. 
However, this fails for $n=6$ and $k=3$ where we found 
that $N_2(J(6,3))=8$. Under the obvious symmetries, 
there are exactly 246 equivalence classes of addressings 
of length~8, one of which we show below. 
We leave determining $N_2(J(n,k))$ 
for other values of $n$ and $k$ as an open problem.

\begin{equation*}
\begin{tabular}{|c|c|c|}
\hline
subset & address \\\hline
 $\{1, 2, 3\}$ & 0000**** \\
 $\{1, 2, 4\}$ & 0001**** \\
 $\{1, 3, 4\}$ & 01**0000 \\
 $\{2, 3, 4\}$ & 010*010* \\
 $\{1, 2, 5\}$ & 010*10*1 \\
 $\{1, 3, 5\}$ & 01*010*0 \\
 $\{2, 3, 5\}$ & 010011** \\
 $\{1, 4, 5\}$ & 01*110*0 \\
 $\{2, 4, 5\}$ & 010111** \\
 $\{3, 4, 5\}$ & 011**10* \\
 $\{1, 2, 6\}$ & *10*0011 \\
 $\{1, 3, 6\}$ & *1*00010 \\
 $\{2, 3, 6\}$ & *100011* \\
 $\{1, 4, 6\}$ & *1*10010 \\
 $\{2, 4, 6\}$ & *101011* \\
 $\{3, 4, 6\}$ & 11**0*00 \\
 $\{1, 5, 6\}$ & *11**011 \\
 $\{2, 5, 6\}$ & 110*1**1 \\
 $\{3, 5, 6\}$ & *110*11* \\
 $\{4, 5, 6\}$ & *111*11* \\ \hline
\end{tabular}
\end{equation*}

\section{Odd cycles}

Watanabe, Ishii and Sawa \cite{WIS} studied the optimal $(0,1,2,*)$-addressings of various graphs. They observed the following pattern for odd cycles $N_3(C_5)=3, N_3(C_7)=4, N_3(C_9)=5$ and asked the natural question whether $N_3(C_{2n+1})=n+1$ for $n\geq 5$ ?

By computation, we have confirmed these results as well as showing that
$N_3(C_{11})=6$. However, the pattern does not continue further and we have computed $N_3(C_{13})=8$, $N_3(C_{15})=9$, $N_3(C_{17})=10$ and $N_3(C_{19})=11$.
The first four of these values were verified by two independent programs.
Examples of minimal addressings are below. It would be nice to determine $N_3(C_{2n+1})$ in general.
\begin{equation*}
\begin{tabular}{|c|c|c|c|c|c|c|c|c|}
\hline
& $C_5$ & $C_7$ & $C_9$ & $C_{11}$ & $C_{13}$ & $C_{15}$ & $C_{17}$ & $C_{19}$ \\
\hline
1 &000 & 0000 & 00000  & 000000 & 00000000 & 000000000 & 0000000000  & 00000000000 \\
2 & 001 & 0001 & 00001  & 00002* & 00000001& 000000001 & 0000000001 &  00000000001 \\
3 &011 & 0101 & 01001 & 000011 & 00000101 & 000002*01 & 000002*001 &    0200000*001\\
4 & 11* & 0111 & 012*1 & 010011 & 00100101& 000001101 & 0000011001  &    01000001001\\
5 & 2*0 & 111* & 01111 & 012*11 & 0012*101& 001001101& 0010011001  &      010000*1101\\
6 & & *210 & 1111* & 011111& 00111101& 0012*1101 & 0012*11001  &               110*00*1101\\
7 & & 20*0 & *2110& 11111* &00111111 & 001111101 & 0011111001  &               210100*1101\\
8 & &  & 201*0& 11110* &0111111* & 001111111 & 00111112*1  &                        21*100*1111\\
9 & &  & 200*0&*21100 &1*111*10& 01111111* & 0011111111 &                           211100*1121\\
10 & &  &  & 201*00 &**211010 & 01111111* & 011111111* &                               21110111*21\\
11 & &  &  & 200*00 &2*01*010 & 1*1110*10 & 1*1111*110 &                               2111*111*22\\
12 & &  &  & & 2*00*010 & **2110010 &1*1110*110 &                                           2111111**20\\
13 & &  &  & & 020000*0 & 2*01*0010 & **21100110 &                                        2011111**20\\
14 & &  &  & & & 2*00*0010 & 2*01*00110  &                                                      20112**0220\\
15 & &  &  & & & 0200000*0 & 2*00*00110 &                                                      201*2100020\\
16 & & & & & & & 02000001*0 &                                                                         001*2100020\\
17 & & & & & & & 02000000*0 &                                                                         00*022*0020\\
18 & & & & & & & &                                                                                             00*022*0000\\
19 & & & & & & & &                                                                                             000020*0000\\
\hline
\end{tabular}
\end{equation*}

\section{Complete multipartite graphs}

The problem of finding optimal addressings for the complete multipartite graphs is non trivial. Graham and Pollak \cite{GP1} proved that $N_2(T)=|V(T)|-1$ for any tree $T$. This implies that $N_2(K_{1,n})=n$ for any $n\geq 1$. The optimal lengths of $\{0,1,*\}$-addressings of all other complete bipartite graphs were obtained by several authors. 
\begin{theorem}[Fujii-Sawa \cite{FS}, Graham-Pollak \cite{GP1}]\label{N2Kmn}
If $m,n\geq 2$, then 
$$
N_2(K_{m,n})=
\begin{cases}
m+n-1 \text{ if } (m,n)=(2,3), (2,4), (2,6), (3,3), (3,4), (3,5), (3,6), (4,4), (4,5) \\
m+n-2 \text{ otherwise}
\end{cases}
$$
\end{theorem}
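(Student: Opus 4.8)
The plan is to reduce $N_2(K_{m,n})$ to the biclique partition number of the distance multigraph $\mathcal{D}(K_{m,n})$ (as recorded in the introduction, for $r=2$ an addressing is exactly a partition of the edge multiset of $\mathcal{D}(G)$ into complete bipartite graphs), pin the value down to two consecutive integers using the eigenvalue bound together with Winkler's theorem, and only then decide between them. Write $A,B$ for the two sides with $|A|=m$, $|B|=n$. Since same-side vertices are at distance $2$ and opposite-side vertices at distance $1$, the distance matrix is $D=\mathbf{1}\mathbf{1}^{T}+\mathbf{1}_A\mathbf{1}_A^{T}+\mathbf{1}_B\mathbf{1}_B^{T}-2I$, where $\mathbf{1}_A,\mathbf{1}_B$ are the side indicators and $\mathbf{1}=\mathbf{1}_A+\mathbf{1}_B$. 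First I would diagonalize $D$: on the two–dimensional space spanned by $\mathbf{1}_A,\mathbf{1}_B$ it acts by the block $\bigl(\begin{smallmatrix}2m-2 & n\\ m & 2n-2\end{smallmatrix}\bigr)$, whose eigenvalues are $(m+n-2)\pm\sqrt{m^2-mn+n^2}$, while on the orthogonal complement (vectors summing to zero on each side) it equals $-2I$, giving the eigenvalue $-2$ with multiplicity $m+n-2$. Hence $n_{-}(D)=m+n-2$ and $n_{+}(D)\le 2\le m+n-2$, so \eqref{lowerboundN2} yields the uniform bound $N_2(K_{m,n})\ge m+n-2$. As $K_{m,n}$ is connected on $m+n$ vertices, Winkler's theorem \cite{W} gives $N_2(K_{m,n})\le m+n-1$. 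Thus $N_2(K_{m,n})\in\{m+n-2,\,m+n-1\}$, and the whole theorem becomes the question of deciding, for each pair, whether $K_{m,n}$ is eigensharp.

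For the generic value $m+n-2$ (every pair outside the listed nine) I would exhibit an explicit partition of $\mathcal{D}(K_{m,n})$ into $m+n-2$ complete bipartite graphs. The key structural point is that the bicliques must straddle both sides: the naive part-separated decomposition—one $K_{m,n}$ for the cross edges, plus biclique partitions of the doubled cliques on $A$ and on $B$—is too wasteful, because a single doubled edge already forces $\mathrm{bp}(2K_2)=2$ rather than its spectral value $1$, and this route only reaches $m+n-1$. Instead, as already occurs for $K_{2,2}=C_4$ (decomposed by its two \emph{diagonal} copies of $K_{2,2}$, namely $\{a_1,b_1\}\times\{a_2,b_2\}$ and $\{a_1,b_2\}\times\{a_2,b_1\}$), I would pair vertices across $A$ and $B$ and route each doubled edge through two mixed bicliques, most naturally via an induction that passes from $K_{m,n}$ to $K_{m-1,n}$ (or $K_{m,n-1}$) spending exactly one extra biclique, consistent with the budget $m+n-2$. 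Producing this family-wide construction, with base cases chosen to avoid the exceptional islands, is the first substantial task.

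The remaining and harder task is the nine exceptional pairs, where the inertia bound is not tight and one must prove the strict inequality $N_2(K_{m,n})\ge m+n-1$. Here the eigenvalue method is powerless, so I would analyze a hypothetical $(m+n-2)$-biclique partition of $\mathcal{D}(K_{m,n})$ directly: classify the bicliques by how they meet $A$ and $B$, combine the exact edge counts ($mn$ cross edges, $m(m-1)$ and $n(n-1)$ doubled-clique edges) with the requirement that the multiplicity pattern at each vertex be realized, and derive a contradiction. Since only finitely many small pairs occur, this can be organized as a bounded case analysis (and, as the paper's computations elsewhere suggest, confirmed by computer search); the matching upper bound $m+n-1$ is then free from Winkler's theorem.

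I expect the main obstacle to be exactly this last step. Separating the nine genuinely non-eigensharp pairs from their eigensharp neighbours—explaining, for instance, why $(2,4)$ and $(2,6)$ fail while $(2,5)$ succeeds—requires an obstruction strictly stronger than the spectral lower bound, and the delicate part is to find a uniform combinatorial reason for these failures rather than settling the theorem by nine unrelated ad hoc verifications.
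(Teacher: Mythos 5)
First, note that the paper does not prove this statement at all: it is imported verbatim from Fujii--Sawa \cite{FS} and Graham--Pollak \cite{GP1}, so there is no internal proof to compare against. Judged on its own terms, your proposal correctly carries out the easy half. The computation of the inertia of $D=\mathbf{1}\mathbf{1}^{T}+\mathbf{1}_A\mathbf{1}_A^{T}+\mathbf{1}_B\mathbf{1}_B^{T}-2I$ is right (the $2\times 2$ block, its eigenvalues, and the eigenvalue $-2$ with multiplicity $m+n-2$ all check out), so \eqref{lowerboundN2} gives $N_2(K_{m,n})\ge m+n-2$, and Winkler's theorem gives $N_2(K_{m,n})\le m+n-1$. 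Your observation that the naive side-separated decomposition is too wasteful, and your $C_4$ example of two ``diagonal'' bicliques, are also correct.

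However, there is a genuine gap: everything that actually distinguishes the two cases of the theorem is left as a plan rather than executed. You do not produce the $(m+n-2)$-biclique partition for the generic pairs (you explicitly defer ``producing this family-wide construction''), and the proposed induction from $K_{m,n}$ to $K_{m-1,n}$ at a cost of one biclique cannot work uniformly as stated, since it would contradict the nine exceptional pairs sitting inside otherwise eigensharp families (e.g.\ it would carry $(2,5)$, which is eigensharp, down to $(2,4)$, which is not, or vice versa); any such induction needs carefully chosen directions and base cases, and you have not exhibited them. Likewise, for the nine exceptional pairs you only name the strategy (classify bicliques by how they meet $A$ and $B$, count edges, derive a contradiction, or run a computer search) without deriving the obstruction; as you yourself note, this is exactly where the spectral method fails and where the theorem's content lies. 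So the proposal narrows the answer to $\{m+n-2,\,m+n-1\}$ but does not decide between them for any pair, which is the entire substance of the result being quoted.
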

We now determine $N_2(K_{a,b,c})$ for several values of $a, b, c$. 
\begin{prop}
For any integer $a\geq 1, N_2(K_{a,1,1})=a+1$.
\end{prop}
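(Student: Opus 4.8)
The plan is to establish the two matching bounds $N_2(K_{a,1,1})\ge a+1$ and $N_2(K_{a,1,1})\le a+1$. Throughout I label the part of size $a$ by $u_1,\dots,u_a$ and the two singleton parts by $v,w$. Since $v$ and $w$ are adjacent to every other vertex while the $u_i$ form an independent set, the graph has diameter $2$ with $d(u_i,u_j)=2$ for $i\ne j$ and all remaining distances equal to $1$. Hence its distance matrix $D$ (on $a+2$ vertices) has the block form
\[
D=\begin{pmatrix} 2(J_a-I_a) & \mathbf 1 & \mathbf 1\\ \mathbf 1^{\!\top} & 0 & 1\\ \mathbf 1^{\!\top} & 1 & 0\end{pmatrix},
\]
and my lower bound will come from computing the inertia of $D$ and invoking \eqref{lowerboundN2}.

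For the lower bound I would exploit the symmetry of $D$ under permuting the $u_i$ and swapping $v,w$ to split $\R^{a+2}$ into $D$-invariant subspaces. On the $(a-1)$-dimensional space of sum-zero vectors supported on $\{u_1,\dots,u_a\}$, $D$ acts as $-2I$ (using $J_a x = 0$); on the line spanned by $e_v-e_w$ it acts as $-I$; and on the remaining $2$-dimensional "symmetric" space, spanned by $(\mathbf 1,0,0)$ and $(0,1,1)$, it reduces to the $2\times 2$ matrix $\bigl(\begin{smallmatrix}2(a-1)&2\\ a&1\end{smallmatrix}\bigr)$. The key observation is that this matrix has trace $2a-1>0$ but determinant $-2<0$ for every $a$, so it contributes exactly one positive and one negative eigenvalue. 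Counting signs gives $n_+(D)=1$ and $n_-(D)=(a-1)+1+1=a+1$, whence $N_2(K_{a,1,1})\ge\max(n_+,n_-)=a+1$.

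For the upper bound I would exhibit a biclique partition of the distance multigraph $\mathcal D(K_{a,1,1})$ of size $a+1$, which by the equivalence stated in the introduction gives an addressing of that length. Note $\mathcal D$ is the complete graph on all $a+2$ vertices with every edge inside $\{u_1,\dots,u_a\}$ doubled. Take, for each $i$, the star $S_i$ consisting of all $a+1$ edges from $u_i$ to the other vertices, together with the single edge $\{v,w\}$; these are $a+1$ complete bipartite graphs. Each edge $u_iu_j$ is used once by $S_i$ and once by $S_j$ (matching its multiplicity $2$), each edge $u_iv$ and $u_iw$ is used only by $S_i$, and $\{v,w\}$ is supplied by the last biclique, so the partition is exact. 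Equivalently, one assigns $u_i$ the length-$(a+1)$ word with $0$ in coordinate $i$, $1$ in the other coordinates $1,\dots,a$, and $*$ in the last coordinate, assigns $v$ the word $(1,\dots,1,0)$ and $w$ the word $(1,\dots,1,1)$; a direct check of all vertex pairs recovers the distances $2$ and $1$.

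The main obstacle is the lower bound, specifically verifying the inertia of $D$: the routine block computation must be organized so that the three invariant pieces are shown to exhaust $\R^{a+2}$ and the signs are tracked correctly, with the crucial point being the negative determinant of the reduced $2\times 2$ block (which forces the single positive eigenvalue to sit there). Once this is set up the construction in the last paragraph is immediate, and the two bounds meet at $a+1$.
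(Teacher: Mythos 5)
Your proof is correct, and both halves check out. The lower bound is essentially the paper's argument: the paper also computes the inertia of $D$ and gets $n_-(D)=a+1$, then applies \eqref{lowerboundN2}; it simply lists the eigenvalues ($-2$ with multiplicity $a-1$, $-1$ once, and a pair of which exactly one is negative) rather than organizing the computation by invariant subspaces as you do. Incidentally, your reduced $2\times 2$ block yields the remaining two eigenvalues as $\frac{2a-1\pm\sqrt{(2a-1)^2+8}}{2}$, which is consistent with $\operatorname{tr}(D)=0$ and with the case $a=1$ (where $K_{1,1,1}=K_3$); the paper's stated value $\frac{2a+1\pm\sqrt{(2a+1)^2+8}}{2}$ appears to contain a harmless sign slip, since the count of negative eigenvalues is unaffected. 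Where you genuinely diverge is the upper bound: the paper obtains $N_2(K_{a,1,1})\le a+1$ in one line by invoking Winkler's squashed cube theorem ($N_2(G)\le |V(G)|-1$ for connected $G$ on $|V(G)|$ vertices, here $|V(G)|=a+2$), whereas you exhibit an explicit partition of the distance multigraph into $a$ stars plus the single edge $\{v,w\}$, together with the corresponding $(0,1,*)$-addressing, both of which verify correctly. Your version is self-contained and constructive; the paper's is shorter but imports a substantial theorem as a black box. Either route is perfectly acceptable.
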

\begin{proof}
It is not too hard to see that the eigenvalues of the distance matrix of $K_{a,1,1}$ are $-2$ with multiplicity $a-1$, $-1$ with multiplicity $1$ and $\frac{2a+1\pm \sqrt{(2a+1)^2+8}}{2}$, each with multiplicity $1$. Therefore, the number of negative eigenvalues of this matrix is $a+1$. Inequality \eqref{lowerboundN2} and Winkler's result \cite{W} imply that $N_2(K_{a,1,1})=a+1$.
\end{proof}
For other values of $a,b,c$, we will use the following simple lemmas and Theorem \ref{N2Kmn}
\begin{lemma}\label{23part}
If $a,b,c\geq 1$ are integers, then $N_2(K_{a,b,c})\geq N_2(K_{a+b,c})-1$.
\end{lemma}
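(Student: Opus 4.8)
The plan is to prove the inequality by an explicit extension argument: starting from an optimal $\{0,1,*\}$-addressing of $K_{a,b,c}$ of length $N=N_2(K_{a,b,c})$, I would build an addressing of $K_{a+b,c}$ of length $N+1$, which immediately gives $N_2(K_{a+b,c})\le N_2(K_{a,b,c})+1$, equivalently the claimed bound. Write the three color classes of $K_{a,b,c}$ as $A,B,C$ with $|A|=a$, $|B|=b$, $|C|=c$, and regard $K_{a+b,c}$ as having parts $A\cup B$ (of size $a+b$) and $C$ (of size $c$).

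The first step is to compare the two distance structures, since both graphs have diameter $2$. In $K_{a,b,c}$ the distance is $1$ between vertices in different classes and $2$ between distinct vertices of the same class; in $K_{a+b,c}$ the distance is $1$ between $A\cup B$ and $C$, and $2$ between distinct vertices lying in the same part. Comparing entrywise, all distances agree except for a pair $x\in A$, $y\in B$: such a pair is adjacent in $K_{a,b,c}$ (distance $1$) but lies in the common part $A\cup B$ of $K_{a+b,c}$ (distance $2$). Thus passing from $K_{a,b,c}$ to $K_{a+b,c}$ requires increasing exactly the $A$--$B$ distances by one and leaving every other distance unchanged.

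The construction is then to append a single new coordinate to the given length-$N$ addressing $f$: set the new symbol to $0$ for every vertex of $A$, to $1$ for every vertex of $B$, and to $*$ for every vertex of $C$. In the new length-$(N+1)$ addressing, the extra coordinate contributes $1$ to the distance between an $A$-vertex and a $B$-vertex (symbols $0$ and $1$), contributes $0$ within $A$ (both $0$) and within $B$ (both $1$), contributes $0$ to any pair involving $C$ (one symbol is $*$), and contributes $0$ within $C$ (both $*$). Hence exactly the $A$--$B$ distances increase by one and all other distances are preserved, so the extended addressing realizes precisely the distances of $K_{a+b,c}$.

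I do not expect a serious obstacle here; the only point that needs care is the exhaustive verification that the appended coordinate changes the distance by one on $A$--$B$ pairs and by zero on every other type of pair, which is what isolates the single structural difference between the two distance matrices. The argument is specific to $r=2$ in that it uses a $\{0,1,*\}$ column with the two non-star symbols assigned to $A$ and $B$; the same idea should generalize to $N_r$ with an analogous $r$-symbol column.
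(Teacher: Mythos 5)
Your proof is correct and follows exactly the paper's approach: append one column with $0$ on $A$, $1$ on $B$, and $*$ on $C$ to an optimal addressing of $K_{a,b,c}$, which raises precisely the $A$--$B$ distances from $1$ to $2$ and yields an addressing of $K_{a+b,c}$ of length $N_2(K_{a,b,c})+1$. Your write-up simply makes explicit the distance-matrix comparison that the paper leaves implicit.
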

\begin{proof}
Adding one column containing exactly $a$ $0$s and $b$ $1$s (corresponding to the partite sets of sizes $a$ and $b$ respectively in $K_{a,b,c}$) to an optimal addressing of $K_{a,b,c}$ will yield an addressing of $K_{a+b,c}$.
\end{proof}

\begin{lemma}\label{plus3}
For any integers $a,b,c\geq 1$, $N_2(K_{a+3,b,c})\leq N_2(K_{a,b,c})+3$.
\end{lemma}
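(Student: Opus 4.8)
The plan is to prove this by constructing an addressing of $K_{a+3,b,c}$ out of an optimal addressing of $K_{a,b,c}$ together with a short block of extra coordinates that handles the three new vertices added to the first partite class. Let me set up notation: start from an optimal addressing $g$ of $K_{a,b,c}$ of length $N=N_2(K_{a,b,c})$, with vertex set partitioned as $A\cup B\cup C$ where $|A|=a$, $|B|=b$, $|C|=c$. I want to produce an addressing $g'$ of $K_{a+3,b,c}$, where the enlarged first class is $A'=A\cup\{u_1,u_2,u_3\}$, by appending exactly $3$ new coordinates. The length will then be $N+3$, giving the claimed bound.

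The key structural observation is what the distance matrix of a complete multipartite graph looks like: the distance between two vertices is $0$ if they are equal, $1$ if they lie in different classes, and $2$ if they are distinct vertices of the same class. So in $g'$ I need: each new vertex $u_i$ at distance $1$ from everything in $B\cup C$ and at distance $2$ from everything in $A$ and from the other two new vertices; and all old distances among $A\cup B\cup C$ preserved. First I would extend $g$ to all old vertices by padding the $3$ new coordinates with the symbol $*$, so the old pairwise distances are untouched. Then the whole problem reduces to choosing the $3$-coordinate words for $u_1,u_2,u_3$ and deciding the $3$ new symbols on the old vertices so that the required distances to and among the $u_i$ come out correctly. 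The natural choice is to give the three new coordinates the combinatorial meaning of an optimal addressing of the small graph $K_{3}\cong K_{1,1,1}$ (a triangle) on the three new vertices: Graham--Pollak give $K_3$ an eigensharp addressing of length $2$, but to also encode distance $1$ from $B\cup C$ and distance $2$ from the old class $A$, I expect to need all three coordinates, assigning on the old vertices a fixed pattern that reads as distance $1$ against $B\cup C$ and distance $2$ against $A$.

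Concretely, I would try the following assignment on the three appended coordinates. On $u_1,u_2,u_3$ use three words that pairwise agree in exactly one non-$*$ coordinate and differ in exactly two (so mutual distance $2$); on every vertex of $A$ use a word realizing distance $2$ to each $u_i$ and distance $0$ consistency with the $*$-padding; and on every vertex of $B\cup C$ use a word realizing distance $1$ to each $u_i$. The cleanest way to guarantee the $A$-to-$u_i$ distances is to have the $A$-vertices carry a word that differs from each $u_i$ in exactly two non-$*$ positions and from the other two old classes appropriately; since $B\cup C$ vertices only need distance $1$ to the $u_i$ and distance $0$ contribution to each other (they already sit at the correct distance from the padding), a single shared pattern on $B\cup C$ should work. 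Verifying that one consistent triple of columns simultaneously meets all these constraints is the crux.

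The main obstacle I anticipate is exactly this simultaneity: the three new coordinates must, in one stroke, give mutual distance $2$ among the $u_i$, distance $2$ from all of $A$, distance $1$ from all of $B\cup C$, and zero net contribution to every pre-existing distance among old vertices. The last condition is automatic from the $*$-padding, but the first three are competing constraints on only three coordinates over the alphabet $\{0,1,*\}$, and it is not obvious a priori that a feasible assignment exists. I would resolve this by exhibiting an explicit $3$-column table (analogous to the small explicit tables already used in the paper) and checking the six pairwise distance types by inspection; because the number of distinct constraint types is small and the target distances are all at most $2$, a direct case check over the columns should close the argument. If a clean uniform pattern resists, a fallback is to reinterpret the three columns as the edge-partition language of the introduction: three complete multipartite graphs (each with $2$ or $3$ color classes) that together account for exactly the edges of $\mathcal{D}(K_{a+3,b,c})$ not already covered by the edge-partition realizing $g$, which is the same statement phrased in the distance-multigraph formulation and may make the consistency check more transparent.
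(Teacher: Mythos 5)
There is a genuine gap, and it sits exactly where you located ``the crux.'' Your construction never specifies the first $N_2(K_{a,b,c})$ coordinates of the new vertices $u_1,u_2,u_3$, and the system of constraints you then impose on the three appended columns is unsatisfiable. Since every pair of old vertices must receive zero contribution from the appended columns, in each such column the old vertices may use only one of the symbols $0,1$ (together with $*$); after complementing columns we may assume it is $0$. Write $T_i$ for the set of appended columns in which $u_i$ carries a $1$ and $P_w$ for the set in which an old vertex $w$ carries a $0$. Your requirements become $|P_w\cap T_i|=2$ for all $w\in A$ and all $i$, $|P_w\cap T_i|=1$ for all $w\in B\cup C$ and all $i$, and mutual distance $2$ among the $u_i$ within the three columns. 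A short check shows the last condition together with $|T_i|\ge 2$ forces each $u_i$'s appended word to lie in $\{0,1\}^3$ with $|T_i|=2$ and the $T_i$ pairwise distinct, so $\{T_1,T_2,T_3\}$ is the set of all three $2$-subsets of the column set; but then for any $w\in B\cup C$ one has $\sum_{i=1}^{3}|P_w\cap T_i|=2|P_w|$, which is even and cannot equal $1+1+1=3$. Hence no explicit $3$-column table of the kind you intend to exhibit exists, and your fallback reformulation in biclique language is the same infeasible system.

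The paper escapes this by not asking the three new columns to interact with $B\cup C$ (or with most of $A$) at all: starting from an optimal addressing $f$ of $K_{a,b,c}$, it realizes the three new vertices as \emph{clones} of one fixed vertex $v\in A$, giving each of them the prefix $f(v)$, which already sits at distance $1$ from every vertex of $B\cup C$ and at distance $2$ from every vertex of $A\setminus\{v\}$. It then appends $000,011,101,110$ to $v,u_1,u_2,u_3$ respectively and $***$ to every other vertex, so the appended columns only need to realize the six mutual distances $2$ inside $\{v,u_1,u_2,u_3\}$, which the even-weight words of length $3$ do. Your proposal contains the germ of this (the three pairwise-distance-$2$ words for the $u_i$), but without the cloning step the argument cannot be completed.
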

\begin{proof}
Take an optimal addressing $f$ for $K_{a,b,c}$ and make three copies (call them $x, y$ and $z$) of a given vertex $v$ in the $A$ color class. Give the vertices in the new graph $K_{a+3,b,c}$ the following addresses:
\begin{equation*}
g(u)=\begin{cases}
f(v)000 & \text{ if } u=v\\
f(v)011 & \text{ if } u=x\\
f(v)101 & \text{ if } u=y\\
f(v)110 & \text{ if } u=z\\
f(u)*** & \text{ otherwise.}
\end{cases}
\end{equation*}
It can be checked easily that the function $g$ is a valid addressing of $K_{a+3,b,c}$. This proves our assertion.
\end{proof}

Using these lemmas we now prove the following result.
\begin{prop}
For any integers $a,b\geq 2$, $N_2(K_{a,b,1})=a+b-1$.
\end{prop}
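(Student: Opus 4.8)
We must prove $N_2(K_{a,b,1}) = a+b-1$ for all integers $a,b \geq 2$, establishing matching upper and lower bounds.

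**Plan.** The proof splits into a lower bound and an upper bound. For the \textbf{lower bound}, I would invoke Lemma \ref{23part}, which gives $N_2(K_{a,b,1}) \geq N_2(K_{a+b,1}) - 1$. Now $K_{a+b,1}$ is the complete bipartite graph $K_{a+b,1} = K_{1,a+b}$, which is the star on $a+b+1$ vertices. Since Graham and Pollak proved $N_2(T) = |V(T)|-1$ for any tree $T$, and the star $K_{1,a+b}$ is a tree on $a+b+1$ vertices, we get $N_2(K_{a+b,1}) = a+b$. Hence $N_2(K_{a,b,1}) \geq a+b-1$.

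**Upper bound.** For the reverse inequality I would exhibit an explicit addressing of length $a+b-1$, and the natural strategy is to reduce to the known bipartite values in Theorem \ref{N2Kmn} via Lemma \ref{plus3}. Concretely, $K_{a,b,1}$ is obtained from $K_{a+1,b}$ (or $K_{a,b+1}$) by splitting one vertex of a part off into its own singleton class, which is essentially the operation of adding one coordinate. A clean approach is to start from an optimal addressing of the complete bipartite graph $K_{a,b}$ of length $N_2(K_{a,b})$ and append a single coordinate that is $0$ on the $a$-class, $1$ on the $b$-class, and then encode the new isolated (singleton) vertex, checking that all pairwise distances (which are $1$ between distinct parts and $2$ within a part) are realized. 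Since by Theorem \ref{N2Kmn} one has $N_2(K_{a,b}) \leq a+b-2$ for most $(a,b)$, one extra coordinate yields $a+b-1$; the finitely many exceptional pairs where $N_2(K_{a,b}) = a+b-1$ must be handled separately by a direct construction or by using Lemma \ref{plus3} to reduce $a$ modulo $3$ down to a small base case addressed by computer.

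**Main obstacle.** The delicate point is the upper-bound construction: one must verify that appending coordinates preserves \emph{all} distances simultaneously — distance $1$ across parts and distance $2$ within a part, including the new singleton's distance $2$ to everything in the same original part but distance $1$ elsewhere. In particular the bookkeeping for how the singleton class interacts with the existing $\{0,1,*\}$-pattern is where errors hide, and the exceptional pairs from Theorem \ref{N2Kmn} (where the bipartite optimum is already $a+b-1$ rather than $a+b-2$) require care so that the final length does not overshoot to $a+b$. I expect the cleanest writeup to combine Lemma \ref{plus3} (to induct in steps of $3$ on the largest part) with a handful of explicit small base cases, matching the lower bound exactly.
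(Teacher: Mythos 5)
Your lower bound is exactly the paper's: Lemma \ref{23part} with $c=1$ plus the Graham--Pollak value $N_2(K_{1,a+b})=a+b$ for the star gives $N_2(K_{a,b,1})\geq a+b-1$. Your preferred route for the upper bound --- induct in steps of $3$ via Lemma \ref{plus3} down to small computer-found base cases --- is also precisely what the paper does (it uses strong induction on $a+b$ with the six base cases $K_{2,2,1},K_{3,2,1},K_{4,2,1},K_{3,3,1},K_{4,3,1},K_{4,4,1}$, which cover all pairs with $2\le b\le a\le 4$, and then applies Lemma \ref{plus3} for $a\ge 5$).

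Two points need fixing, however. First, your ``clean approach'' of appending to an optimal addressing of $K_{a,b}$ \emph{a coordinate that is $0$ on the $a$-class and $1$ on the $b$-class} is wrong as stated: that coordinate (which is the one used in the proof of Lemma \ref{23part}, for the opposite purpose of merging two classes) would raise every cross-part distance from $1$ to $2$. The coordinate you actually want is $0$ on every old vertex and $1$ on the new singleton, with the singleton receiving $*$ in all old coordinates; this correctly preserves all old distances and puts the singleton at distance $1$ from everything, giving $N_2(K_{a,b,1})\le N_2(K_{a,b})+1$, which suffices except for the nine exceptional pairs of Theorem \ref{N2Kmn}. Second, whichever route you take, the argument is not complete without the explicit base-case addressings: the induction via Lemma \ref{plus3} has no foundation until addressings of length $a+b-1$ are exhibited for all $2\le b\le a\le 4$ (or, in the other route, for the exceptional pairs). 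These are genuinely needed constructions --- the paper supplies them as computer-found tables --- and a proof that merely asserts they can be ``addressed by computer'' has a gap at exactly that point.
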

\begin{proof}
Combining Lemma \ref{23part} with Graham and Pollak's result involving addressings of stars, we deduce that 
\begin{equation}
N_2(K_{a,b,1})\geq N_2(K_{a+b,1})-1=a+b-1
\end{equation}
for any $a,b\geq 1$.

To prove the upper bound, we use strong induction on $a+b$. By computer, we have found the following optimal addressings of several complete $3$-partite graphs. This takes care of our base case for the induction.
\begin{equation}
\begin{tabular}{|c|c|}
\hline 
$K_{2,2,1}$ & $N_2=3$\\
\hline
A1 & 000\\
A2 & 110\\
B1 & 100\\
B2 & 010\\
C1 & **1\\
\hline
\end{tabular}
\quad
\begin{tabular}{|c|c|}
\hline
$K_{3,2,1}$ & $N_2=4$\\
\hline
A1 & 0000\\
A2 & 0011\\
A3 & 11**\\
B1 & 0*01\\
B2 & 0*10\\
C1 & 10**\\
\hline
\end{tabular}
\quad
\begin{tabular}{|c|c|}
\hline
$K_{4,2,1}$ & $N_2=5$\\
\hline
A1 & 00000\\
A2 & 00011\\
A3 & 011**\\
A4 & 110**\\
B1 & *0*01\\
B2 & *0*10\\
C1 & 010**\\
\hline
\end{tabular}
\end{equation}
\begin{equation}
\begin{tabular}{|c|c|}
\hline
$K_{3,3,1}$ & $N_2=5$\\
\hline
A1 & 0000*\\
A2 & 0011*\\
A3 & 11**0\\
B1 & 0*010\\
B2 & 0*100\\
B3 & 1***1\\
C1 & 10**0\\
\hline
\end{tabular}
\quad
\begin{tabular}{|c|c|}
\hline
$K_{4,3,1}$ & $N_2=6$\\
\hline
A1 & 00000*\\
A2 & 00011*\\
A3 & 011**0\\
A4 & 101**0\\
B1 & **0010\\
B2 & **0100\\
B3 & **1**1\\
C1 & 001**0\\
\hline
\end{tabular}
\quad
\begin{tabular}{|c|c|}
\hline
$K_{4,4,1}$ & $N_2=7$\\
\hline
A1 & 000001*\\
A2 & 000010*\\
A3 & 01**000\\
A4 & 1***001\\
B1 & 0001***\\
B2 & 0010***\\
B3 & 0100**1\\
B4 & 1*00**0\\
C1 & 000000*\\
\hline
\end{tabular}
\end{equation}

Let $a,b\geq 2$ such that $a\geq 5$ and $b\geq 2$. By induction hypothesis, $N_2(K_{a-3,b,1})=(a-3)+b-1$. Lemma \ref{plus3} gives us that $N_2(K_{a,b,c})\leq N_2(K_{a-3,b,c})+3=a+b-1$ which finishes our proof.
\end{proof}

By computer, we have found the following addressings of several other complete $3$-partite graphs. Theorem \ref{N2Kmn} and Lemma \ref{23part} imply that each addressing below is optimal.
\begin{equation}
\begin{tabular}{|c|c|}
\hline
$K_{3,2,2}$ & $N_2=5$\\
\hline
A1 & 00000\\
A2 & 00011\\
A3 & 11***\\
B1 & 010**\\
B2 & *01**\\
C1 & *0001\\
C2 & *0010 \\
\hline
\end{tabular}
\quad 
\begin{tabular}{|c|c|}
\hline
$K_{3,3,2}$ & $N_2=6$\\
\hline
A1 & 000000\\
A2 & 000011\\
A3 & 11****\\
B1 & 0100**\\
B2 & *001**\\
B3 &*010**\\
C1 & *00001\\
C2 & *00010 \\
\hline
\end{tabular}
\quad 
\begin{tabular}{|c|c|}
\hline
$K_{4,2,2}$ &$N_2=6$\\
\hline
A1 & 000000\\
A2 & 000011\\
A3 & 011***\\
A4 & 101***\\
B1 & 0010**\\
B2 & **01**\\
C1 & **0001\\
C2 & **0010 \\
\hline
\end{tabular}
\end{equation}

\begin{equation}
\begin{tabular}{|c|c|}
\hline
$K_{3,3,3}$ & $N_2=7$\\
\hline
A1 & *000000\\      
A2 & *110000\\       
A3 & ***1100\\      
B1 & 0**1000\\    
B2 & 1****10\\   
B3 & 1****01\\      
C1 & 1**1000\\        
C2 & 0100***\\ 
C3 & 0010***\\
\hline
\end{tabular}
\quad 
\begin{tabular}{|c|c|}
\hline
$K_{4,3,2}$ & $N_2=7$\\
\hline
A1 & 0000000\\      
A2 & 0000011\\       
A3 & 011****\\      
A4 & 101****\\    
B1 & 00100**\\   
B2 & **001**\\      
B3 & **010**\\        
C1 & **00001\\ 
C2 & **00010\\
\hline
\end{tabular}
\quad
\begin{tabular}{|c|c|}
\hline
$K_{5,2,2}$ & $N_2=7$\\
\hline
A1 & 0000000\\      
A2 & 0000011\\       
A3 & 0000101\\      
A4 & 0000110\\    
A5 & 0011***\\   
B1 & 0001***\\      
B2 & 0010***\\        
C1 & 01*****\\ 
C2 & 10*****\\
\hline
\end{tabular}
\end{equation}

For $a,m\geq 1$, let $K(a;m)$ denote the complete $m$-partite graph where each color class has exactly $a$ vertices. Thus, $K(1;m)$ is the complete graph on $m$ vertices and $K(a;2)$ is the complete bipartite graph $K_{a,a}$. Determining $N_2(K(2;m))$ is still an open problem and the best results are due to Hoffman \cite{H} (lower bound below) and Zaks \cite{Z} (upper bound):
\begin{equation}
m+\lfloor \sqrt{2m}\rfloor-1\leq N_2(K(2;m))\leq \begin{cases}
(3m-2)/2, \text{ if $m$ is even}\\
(3m-1)/2, \text{ if $m$ is odd}.
\end{cases}
\end{equation}

The following lemma will be used in this section to give upper bounds for $N(K(a;m))$.
\begin{lemma}\label{multipartite}
Let $a,m,s\geq 1$ be integers. If $N_2(K(a;m))\leq t$, then 
\begin{equation}
N_2(K(a;ms))\leq st+s-1.
\end{equation}
\end{lemma}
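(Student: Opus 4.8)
The plan is to build an explicit $\{0,1,*\}$-addressing of $K(a;ms)$ of length $st+s-1$ out of two ingredients: the given length-$t$ addressing $f$ of $K(a;m)$, and an optimal addressing of the complete graph $K_s$, which by the Graham--Pollak theorem \cite{GP1} has length $N_2(K_s)=s-1$. This already explains the shape of the bound: $st$ coordinates will handle distances inside each ``block'' of $m$ color classes, while the remaining $s-1$ coordinates will record which block a vertex lies in. (If $f$ has length strictly less than $t$, I first pad it with stars, which does not change any distance.)

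First I would fix coordinates. I view the $ms$ color classes of $K(a;ms)$ as arranged in $s$ blocks of $m$ classes each, so that a single block together with its internal structure is a copy of $K(a;m)$. I index each vertex as a pair $(i,v)$, where $i\in[s]$ names the block and $v$ is the corresponding vertex of the model graph $K(a;m)$. In $K(a;ms)$ the distance between $(i,v)$ and $(i',v')$ is $0$ if they coincide, $2$ if $i=i'$ and $v,v'$ lie in the same class of the model, and $1$ otherwise. Next I would define $g(i,v)$ as a concatenation of $s$ blocks of length $t$ followed by an $(s-1)$-coordinate tail: in the $j$-th length-$t$ block I place $f(v)$ if $j=i$ and the all-star word $*^t$ otherwise, and for the tail I place the $K_s$-address of the block index $i$ (depending only on $i$). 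The total length is $st+(s-1)$ as required.

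Finally I would verify the distance count for an arbitrary pair $(i,v),(i',v')$. When $i=i'$, only the $i$-th length-$t$ block is star-free on both sides, and there it reproduces exactly the model distance between $v$ and $v'$ in $K(a;m)$; every other length-$t$ block contributes $0$ because one side is all stars, and the tail contributes $0$ because the block indices agree. When $i\ne i'$, every length-$t$ block has a star on at least one side (a block is star-free only for the unique vertex whose index matches it), so the entire length-$t$ portion contributes $0$, while the tail contributes exactly $1$ because distinct vertices of $K_s$ are at distance $1$. In both cases the count equals the prescribed distance in $K(a;ms)$, so $g$ is a valid addressing and the bound follows.

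The main obstacle --- really the only point requiring care --- is arranging the star-padding so that cross-block comparisons contribute nothing; this is what forces the block containing $v$ to be the unique star-free block for $(i,v)$, and it is precisely why the inter-block distances must be supplied by a separate gadget. Isolating that inter-block part as an independent addressing of $K_s$ is the key structural observation, and the term $s-1$ in the statement is exactly the Graham--Pollak value $N_2(K_s)$. Equivalently, one can phrase the whole argument through the biclique-partition characterization of $N_2$ recalled in the introduction: take $s$ disjoint copies of a $t$-biclique partition of $\mathcal{D}(K(a;m))$ to cover all within-block edges, and blow up each part of an $(s-1)$-biclique partition of $K_s$ to size $am$ to cover all between-block edges, giving $st+(s-1)$ bicliques in total.
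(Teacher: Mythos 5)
Your construction is correct and is essentially the same as the paper's proof: partition $K(a;ms)$ into $s$ copies of $K(a;m)$, address each copy with $t$ star-padded coordinates (total $st$), and handle the cross-block edges with a blown-up Graham--Pollak addressing of $K_s$ using $s-1$ further coordinates. You simply spell out the star-padding and the distance verification that the paper leaves implicit.
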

\begin{proof}
Partition the vertex set of $K(a;ms)$ into $s$ copies of $K(a;m)$. Address these $s$ graphs first using words of length $st$. Then we need to address the remaining edges. This is in essence blow-up version of the complete graph $K_s$ and we need $s-1$ coordinates for this part of the addressing. Thus, $N_2(K(a;ms))\leq st+s-1$.
\end{proof}
If we take $a=m=2$, then it is easy to see that $N_2(K(2;2))=2$. Applying the previous lemma, we get that $N_2(K(2;2s))\leq 2s+s-1=3s-1$ which is the upper bound of Zaks above for $m$ even.

The tables in the Appendix show that $N_2(K(4;4))\leq 14$ and $N_2(K(5;5))\leq 23$. Applying Lemma \ref{multipartite}, we obtain the following upper bounds for $N_2(K(a;as))$ when $a\in \{3,4,5\}$. The lower bounds below are obtained by applying the eigenvalue bound \eqref{lowerboundN2}. The gaps between these bounds are quite large and it would be nice to close them.
\begin{prop}
Let $s\geq 1$ be an integer. Then
\begin{align*}
6s \leq N_2(K(3;3s))&\leq 8s-1\\
12s\leq N_2(K(4;4s))&\leq 15s-1\\
20s\leq N_2(K(5;5s))&\leq 24s-1.
\end{align*}
\end{prop}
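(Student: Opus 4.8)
The plan is to establish the three pairs of bounds separately, since each follows from combining the eigenvalue lower bound \eqref{lowerboundN2} with the blow-up upper bound of Lemma \ref{multipartite}. For the upper bounds, I would first record the three small optimal (or near-optimal) addressings that appear elsewhere in the paper, namely $N_2(K(3;3))\leq 8$, $N_2(K(4;4))\leq 14$ and $N_2(K(5;5))\leq 23$ (the latter two being verified by computer in the Appendix). Applying Lemma \ref{multipartite} with $m=a$, $s=s$ and $t$ equal to each of these values then gives immediately
\begin{align*}
N_2(K(3;3s))&\leq 8s+s-1=9s-1,\\
N_2(K(4;4s))&\leq 14s+s-1=15s-1,\\
N_2(K(5;5s))&\leq 23s+s-1=24s-1.
\end{align*}
(The first line here suggests $9s-1$ rather than the claimed $8s-1$, so I would double-check whether the intended seed value is $N_2(K(3;3))\leq 7$, which would give exactly $8s-1$; resolving this discrepancy is the first thing to settle.)

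For the lower bounds, the approach is to compute the number of negative (or positive) eigenvalues of the distance matrix $D$ of $K(a;as)$ and invoke \eqref{lowerboundN2}. The graph $K(a;as)$ has $N=a^2s$ vertices, diameter $2$, and its distance matrix has the block form $D = J - I - A$, where $A$ is the adjacency matrix; equivalently $D = 2(J-I) - A_{\text{complete}}$ adjusted so that within-class pairs contribute distance $2$ and across-class pairs contribute distance $1$. Concretely, writing the vertices grouped into $as$ classes of size $a$, one has $D = 2(J_{as}\otimes (J_a - I_a)) + (\,(J_{as}-I_{as})\otimes J_a\,)$ up to reindexing, so $D$ can be analyzed via the tensor/association-scheme structure of the complete multipartite graph. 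I would diagonalize $D$ using the three-dimensional algebra spanned by $I$, the within-class all-ones blocks, and the global all-ones matrix, reading off the eigenvalues on the three relevant eigenspaces and their multiplicities. The count of negative eigenvalues should come out to $6s$, $12s$ and $20s$ in the three cases, matching $\binom{a}{2}\cdot 2s$ style expressions.

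The main obstacle I expect is the eigenvalue bookkeeping for the lower bounds: I must correctly identify the three eigenspaces of $D$ (the trivial all-ones vector, the space of vectors constant on each class and summing to zero across classes, and the space of vectors summing to zero within each class) and determine the sign of the eigenvalue on each, then multiply by the correct multiplicities. The within-class eigenspace carries the eigenvalue associated with the distance-$2$ structure and will supply the bulk of the negative eigenvalues, giving counts proportional to $s$; care is needed to ensure the constant-on-classes eigenspace and the trivial eigenvector are handled separately, since one of these may be positive and thus not contribute to $n_-(D)$. Once the three eigenvalues and multiplicities are in hand, \eqref{lowerboundN2} yields the stated lower bounds directly. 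The upper-bound half is then entirely routine given Lemma \ref{multipartite} and the seed addressings, so the substance of the proof is concentrated in the spectral computation together with resolving the apparent $8s-1$ versus $9s-1$ inconsistency in the first case.
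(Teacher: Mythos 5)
Your proposal matches the paper's proof exactly: the upper bounds come from applying Lemma~\ref{multipartite} to the computer-found seed addressings, and the lower bounds from the eigenvalue bound \eqref{lowerboundN2} via the spectral decomposition of the distance matrix of $K(a;as)$, whose $as(a-1)$ eigenvalues equal to $-2$ give exactly $6s$, $12s$, $20s$. The discrepancy you flag in the first case is resolved by the paper's table for $K_{3,3,3}$, which exhibits an addressing of length $7$ (not $8$), so the seed value is indeed $t=7$ and Lemma~\ref{multipartite} gives $7s+s-1=8s-1$ as claimed.
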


\section{Random Graphs: computations and asymptotics}

In \cite{Graham}, Graham uses $r(G)$ for $N_2(G)$ and writes that 
\begin{center}
{\em It is not known how $r(G)$ behaves for random graphs, but it is natural to guess that $r(G)=|G|-1$
for almost all large graphs $G$.}
\end{center}

For $3\leq n\leq 9$, we have computed the distribution of $N_2(G)$ for all connected graphs $G$ on $n$ vertices. Let $\mathcal{F}_n$ denote the family of connected graphs on $n$ vertices. Our results are summarized below. Because every partition the distance multigraph of a connected graph $G$ is a biclique covering of $K_n$, note that $N_2(G)\geq \lceil \log_2 n\rceil$ (see \cite{HHM}).
\begin{equation}\label{compute9n}
\begin{tabular}{|c|c|c|c|c|c|c|}
\hline
$n$ & $|\mathcal{F}_n|$ & $n-1$ & $n-2$ & $n-3$ & $n-4$ & $n-5$\\
\hline
2 & 1 & 1 & 0 & 0 & 0 & 0\\
\hline
3 & 2 & 2 & 0 & 0 & 0 & 0\\
\hline
4 & 6 & 5 & 1 & 0 & 0 & 0\\
\hline
5 & 21 & 17 & 4 & 0 & 0 & 0 \\
\hline
6 & 112 & 67 & 42 & 3 & 0 & 0 \\
\hline
7 & 853 & 316 & 498 & 38 & 1 & 0  \\
\hline
8 & 11117 & 1852 & 7765 & 1469 & 30 & 1 \\
\hline
9& 261080 & 12940 & 159229 & 87094 & 1811& 6 \\
\hline
\end{tabular}
\end{equation}

The computational difficulty of determining $N_2(G)$ increases rapidly as
the order of $G$ or the number of coordinates in addresses becomes
greater.  Our method relies on two symmetry groups, one the symmetries
of the address space and one the automorphisms of~$G$.

The set $\{0,1,*\}^\ell$ is acted on by a group $A_\ell$ of order $2^\ell \,\ell!$,
generated by the $\ell!$ permutations of the coordinates and the $\ell$
elements of order~2 that complement one coordinate.  It is easily checked
that $A_\ell$ preserves distances.  Consequently, we can restrict our
search to addressings that are lexicographically minimal under $A_\ell$.
Fully implementing this restriction would carry too much overhead, so we
limited the pruning to the first three vertices.  For example, we can assume
that the first vertex has an address consisting of some number of $0$s
followed by some number of~$*$s.

After the first three addresses were selected with full pruning by $A_\ell$,
we made lists for each other
vertex $v$ of all the addresses which are the correct distance from each of
the first three addresses.  These were then used in a backtrack search
which processes the vertices in increasing order of their number of
available addresses.  Addresses were stored in one machine word in
a format that allows distances to be calculated in a few machine instructions.
The counts in Table~\ref{compute9n} required about 16 hours of cpu time in total.

Much larger graphs $G$ can only be processed in reasonable time if
their automorphism group $\Aut(G)$ is large.  For any address $\alpha$,
let $\wt(\alpha)$ be the number of $0$s and $1$s in~$\alpha$.  Note that
$\wt(\alpha)$ is preserved by~$A_\ell$, which implies that, if an addressing
of length $\ell$ exists, there is some addressing $f^*$ of length~$\ell$ which is
simultaneously lexicographically minimal under $A_\ell$ and such that
$(\wt(f^*(v_1)),\ldots,\wt(f^*(v_n)))$ is lexicographically minimal under $\Aut(G)$.
We partially implemented the latter restriction as follows: the first vertex $v_1$ has
the smallest value of $\wt(f^*)$ in its orbit under $\Aut(G)$, the second vertex
$v_2$ has the smallest value of $\wt(f^*)$ in its orbit under the stabilizer
$\Aut(G)_{v_1}$, and the third vertex has the smallest value of $\wt(f^*)$ in
its orbit under the two-vertex stabilizer $\Aut(G)_{v_1,v_2}$.
It is likely that this strategy can be improved significantly.

The large number of connected graphs of order 10 (11716571) and the longer time per graph would make it a major operation to do all of those. We ran a random sample of 1/1000 of the connected graphs of order 10 (i.e., 11717 graphs) and obtained this distribution:
\begin{equation}
\begin{tabular}{|c|c|c|c|c|c|}
\hline
$N_2$ & 9 & 8 & 7 & 6 & 5 \\
\hline
\# graphs & 86 & 4105 & 7160 & 363 & 3\\
\hline
\end{tabular}
\end{equation}
These results led us to believe that for
any fixed integer $c\geq 1$, almost all connected graphs $G$ of order 
$n$ have $N_2(G)\leq n-c$. Indeed, we have been able to prove the following
stronger result which confirms this belief and refutes Graham's guess.
We conclude the paper with the statement and its proof.
\begin{theorem}
\label{t51}
For almost all graphs $G$ on $n$ vertices,
$N_2(G) \leq n-(2 -o(1)) \log_2 n$,
where the $o(1)$ term tends to zero as $n$ tends to infinity.
\end{theorem}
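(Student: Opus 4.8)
The plan is to prove the result in the model $G(n,1/2)$ and to produce, with probability tending to $1$, an addressing of the stated length. First I would record the two structural facts about $G=G(n,1/2)$ on which the whole argument rests: with high probability $G$ has diameter $2$, so its distance matrix is $D=2(J-I)-A$ and its distance multigraph $\mathcal{D}(G)$ is precisely $K_n$ together with one additional copy of every non-edge of $G$ (that is, $\mathcal{D}(G)=K_n\sqcup\overline{G}$ as edge multisets); and with high probability the clique number of $G$ equals $(2-o(1))\log_2 n$. By the reformulation in the introduction, a $(0,1,*)$-addressing of length $\ell$ is the same as a partition of the edge multiset of $\mathcal{D}(G)$ into $\ell$ complete bipartite graphs, so it suffices to partition $\mathcal{D}(G)$ into $n-(2-o(1))\log_2 n$ bicliques. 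Winkler's theorem already supplies a partition into $n-1$ bicliques, so the entire task is to \emph{save} $(2-o(1))\log_2 n$ of them; the coincidence of this quantity with the clique number of $G$ is exactly the hint for how the saving should be produced.

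The mechanism I would pursue is a ``savings lemma'': if a diameter-$2$ graph contains a clique $C$ on $c$ vertices, then the $\binom{c}{2}$ pairs inside $C$ all have distance $1$, so within $C$ the multigraph $\mathcal{D}(G)$ is an ordinary $K_c$ carrying \emph{no} doubled edges. This is the feature that makes a clique cheap: it contributes nothing to the copy of $\overline{G}$ that must be laid down a second time. Concretely I would fix a linear order on $V$ that places $C$ last, take the Graham--Pollak star decomposition of the underlying $K_n$ associated to this order (coordinate $i$ being the biclique $\{v_i\}\times\{v_{i+1},\dots,v_n\}$, which covers every pair exactly once), and then enlarge these bicliques --- extending either the singleton side or the tail side into the vertices carrying the second copy of $\overline{G}$ --- so that the doubling demanded by $\mathcal{D}(G)$ is absorbed into coordinates already present rather than into fresh ones. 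Because the $c-1$ coordinates nominally devoted to the internal $K_c$ never have to carry any doubling, the aim is to recycle them to transport the doubling arriving from outside $C$, and to arrange the bookkeeping so that exactly $c-1$ coordinates end up empty and can be deleted.

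Assembling the theorem then amounts to combining the savings lemma with the random-graph input: applying it to a clique $C$ of size $c=(2-o(1))\log_2 n$, whose existence holds with high probability, would give $N_2(G)\le n-1-(c-1)=n-(2-o(1))\log_2 n$. The main obstacle, and the step needing the most care, is showing that the saving is genuinely \emph{additive} in $c$ rather than merely a constant: one must verify that the enlarged bicliques can simultaneously and consistently carry the second copies of \emph{all} non-edges of $\overline{G}$, with no pair over- or under-covered, so that the full block of $c-1$ coordinates (not just a bounded number) is freed. I would attack this by viewing the covering as an assignment problem on the non-edges, routing each required second copy through a distinct already-present biclique, and using the fact that $C$ is a clique (hence an independent set in $\overline{G}$) to guarantee enough free ``slots''; bounding the unavoidable lower-order loss in this routing is what would produce the $o(1)$ term.
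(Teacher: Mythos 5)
Your overall framing is right (work in $G(n,1/2)$, use diameter $2$ so that $\mathcal{D}(G)$ is $K_n$ plus a second copy of each non-edge, and aim to save $(2-o(1))\log_2 n$ bicliques off Winkler's $n-1$), but the engine you propose --- the ``savings lemma'' saying that a clique $C$ of size $c$ in a diameter-$2$ graph lets you delete $c-1$ bicliques --- is both unproven and, as a general principle, false. Concretely: if $\{A_i\times B_i\}$ is any biclique partition of $\mathcal{D}(G)$ and $C$ is a clique, then every pair inside $C$ is at distance $1$, so $\{(A_i\cap C)\times(B_i\cap C)\}$ is a biclique \emph{partition} of $K_c$, and by Graham--Pollak at least $c-1$ of your bicliques must meet $C$ nontrivially on both sides. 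A clique is therefore not ``cheap'': it demands the full $c-1$ bicliques, and your entire saving must come from making those same bicliques carry all of the doubling of $\overline{G}$ outside $C$ --- exactly the step you defer to an unspecified ``routing'' argument. For a counterexample to the clean form of your lemma, take $G=K(2;m)$ (the cocktail party graph, $n=2m$): it has diameter $2$ and clique number $c=m$, so your lemma would give $N_2(G)\le n-c=m$, but Hoffman's lower bound quoted in Section~4 gives $N_2(K(2;m))\ge m+\lfloor\sqrt{2m}\rfloor-1>m$. So no argument using only ``diameter $2$ plus a clique of size $c$'' can work; you would need substantially more structure from the random graph, and that is precisely where your writeup stops.

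The paper's actual mechanism is different in a way worth internalizing: the special set $W$ of size $k=(2-o(1))\log_2 n$ is \emph{not} a clique. One first takes (Alon's theorem) a covering of $K_k$ by only $\lceil 2\sqrt{k}\rceil$ bicliques in which every pair is covered once or twice, defines $H$ on these $k$ vertices by making a pair adjacent iff it is covered once, and then uses the fact that $G(n,1/2)$ whp contains \emph{every} $k$-vertex graph as an induced subgraph to find an induced copy of $H$. On that copy the distance multigraph (distances $1$ and $2$ matching multiplicities $1$ and $2$) is realized by just $O(\sqrt{k})$ bicliques instead of the $k-1$ that a clique would force; the remaining $n-k$ vertices are handled by one biclique $W\times(V-W)$ and $n-k$ stars. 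The coincidence you noticed between $(2-o(1))\log_2 n$ and the clique number is thus a red herring: the same threshold governs induced containment of \emph{all} $k$-vertex graphs, and it is that stronger property, not the clique, that produces the saving.
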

\begin{proof}
Let $G=G(n,0.5)$ be the Erd\H{o}s-R\'enyi binomial random graph on
a set $V=\{1,2,, \ldots ,n\}$ of
$n$ labelled vertices. We have to prove that with high probability
(whp, for short), that is, with probability that tends to $1$ as $n$
tends to infinity, $N_2(G)$ is at most $n-(2-o(1)) \log_2 n$.
Let $k=k(n)$ be the largest $k$ so that
$$
{n \choose k} 2^{-{k \choose 2}} \geq 4k^4.
$$
It is easy to check that $k=(2-o(1)) \log_2 n$, and it is not
too difficult to prove that whp $G(n,0.5)$ contains every graph
on $k$ vertices as an induced subgraph. This is proved, for example,
in \cite{Al2}, Theorem 3.1. (We note that we need a much weaker result, 
as we only need to contain one specific graph on $k$ vertices, as
will be clear from the argument below. This can be proved by a second
moment calculation, without using the large deviation techniques applied
in \cite{Al2}. This, however, only effects the $o(1)$-term in our 
estimate, and it is therefore shorter to refer to a proven written 
result without
having to include the second moment computation in the alternative possible
proof.) 

By Theorem 1.1 in  \cite{Al1} there is a biclique covering of the
complete graph $K_k$ on a set $U$ of $k$ vertices 
by at most $\lceil 2 \sqrt k \rceil$ bicliques, so that
each edge is covered once or twice. Fix such a covering, and let
$H$ be the graph on $U$ in which two vertices $u,v \in U$ are adjacent
if the pair $\{u,v\}$ is covered once in the covering above, and are
not adjacent if this pair is covered twice. Since our random graph
$G$ contains, whp, an induced copy of all graphs on $k$ vertices,
it contains an induced copy of $H$. Let $W \subset V$ be the set of
vertices of such a copy. In addition, whp, the diameter of
$G$ is $2$, in fact, every two vertices have at least 
$(1/4-o(1))n$ common neighbors. 
Therefore, whp, the distances in $G$ between
any pair of vertices in $W$ are realized 
precisely by the (at most) $\lceil 2 \sqrt k \rceil$ bicliques we have chosen.
To these bicliques we add now one complete bipartite graph
with vertex classes $W$ and $V-W$. In addition, for each 
vertex $z$ in $V-W$ add a star centered in $z$ whose leaves are all 
vertices of $W$ that are not adjacent in $G$ to $z$, all vertices
in $V-W$ that are not  adjacent to $z$, and all vertices in $V-W$
which are smaller than $z$ and are adjacent to it in $G$. It is easy 
to check that these bicliques realize all distances in $G$, i.e., they partition the distance multigraph of $G$.
The number of these bicliques is at most $n-k+\lceil 2 \sqrt k \rceil +1
=n-(2-o(1)) \log _2 n$.  This completes the proof, and the paper.
\end{proof}

\section*{Acknowledgments}

Noga Alon was supported  
by ISF grant No. 281/17, GIF
grant No. G-1347-304.6/2016 and the Simons Foundation.
Sebastian M. Cioab\u{a} was supported by NSF grants DMS-160078 and CIF-1815922. The research of Brandon D. Gilbert was supported by the University of Delaware Undergraduate Summer Scholar Program. Jack H. Koolen was partially supported by the National Natural Science Foundation of China (Nos.\ 11471009 and 11671376) and by 'Anhui Initiative in Quantum Information Technologies' (Grant No. AHY150200).

\section*{Appendix}
The tables below imply that 
$N_2(K(4;4))\leq 14$ and $N_2(K(5;5))\leq 23$.

\begin{equation}
\begin{tabular}{|c|c|}
\hline
& $K(4;4)$\\
\hline 
A1& *******0000000\\
A2& *******0000011\\
A3& *******0000101\\
A4& *******0000110\\
B1& *******0001***\\
B2& *0000001**0***\\
B3& *1100001**0***\\
B4& 0***1101**0***\\
C1& *******001****\\
C2& 0**01001*0****\\
C3& 0**00101*0****\\
C4& 1**1***1*0****\\
D1& *******01*****\\
D2& 0*****110*****\\
D3& 1100**010*****\\
D4& 1010**010*****\\
\hline
\end{tabular}
\quad
\begin{tabular}{|c|c|}
\hline
& $K(5;5)$\\
\hline 
A1& ******* ******* 000000000\\
A2& ******* ******* 000000011\\
A3& ******* ******* 000000101\\
A4& ******* ******* 000000110\\
A5& ******* ******* 000011***\\
B1& ******* ******* 000001***\\
B2& ******* 0000000 000*10***\\
B3& ******* 0000011 000*10***\\
B4& ******* 0000101 000*10***\\
B5& ******* 0000110 000*10***\\
C1& ******* ******* 0010*****\\
C2& ******* 0001*** **01*****\\
C3& *000000 1**0*** **01*****\\
C4& *110000 1**0*** **01*****\\
C5& 0***110 1**0*** **01*****\\
D1& ******* ******* 01*0*****\\
D2& ******* 001**** *0*1*****\\
D3& 0**0100 1*0**** *0*1*****\\
D4& 0**0010 1*0**** *0*1*****\\
D5& 1**1*** 1*0**** *0*1*****\\
E1& ******* ******* 1**0*****\\
E2& ******* 01***** 0**1*****\\
E3& 0*****1 10***** 0**1*****\\
E4& 1100**0 10***** 0**1*****\\
E5& 1010**0 10***** 0**1*****\\
\hline
\end{tabular}
\end{equation}

\end{document}